\newtheorem{Prop}{Proposition}[section]
\newtheorem{Def}[Prop]{Definition}
\newtheorem{Theorem}[Prop]{Theorem}
\newtheorem{Lemma}[Prop]{Lemma}
\newtheoremstyle{Example}{\topsep}{\topsep}%
  {}
  {}
  {\bfseries}
  {.}
  { }
  {\thmname{#1}\thmnumber{ #2}\thmnote{ #3}}
\theoremstyle{Example}
\newtheorem{Example}[Prop]{Example}
\newtheoremstyle{Observation}{\topsep}{\topsep}%
  {}
  {}
  {\bfseries}
  {.}
  { }
  {\thmname{#1}\thmnumber{ #2}\thmnote{ #3}}
\theoremstyle{Example}
\newtheorem{Remark}[Prop]{Remark}
\newtheoremstyle{Observation}{\topsep}{\topsep}%
  {}
  {}
  {\bfseries}
  {.}
  { }
  {\thmname{#1}\thmnumber{ #2}\thmnote{ #3}}
\theoremstyle{Observation}
\title[The many polarizations of powers of maximal ideals]{The many polarizations of powers of maximal ideals}
\author{Henning Lohne}
\address{Matematisk Institutt\\
         Johs. Brunsgt. 12\\
         5008 Bergen}
\email{henning.lohne@math.uib.no}
\subjclass[2010]{Primary: 13D02, 13C99; Secondary: 05E40}
\date{\today}
\begin{document}
\begin{abstract}
In this paper, we study different polarizations of powers of the maximal ideal, and polarizations of their related square-free versions. For $n=3$, we show that every minimal free cellular resolution of $m^d$ comes from a certain polarization of the ideal $m^d$. This result is not true for $n=4$. When $I$ is a square-free ideal, we show that the Alexander dual of any polarization of $I$ is a polarization of the Alexander dual ideal of $I$. We apply this theorem, and study different polarizations of the ideals $m_\mathrm{sq.fr}^d$ and their Alexander duals $m_\mathrm{sq.fr}^{n-d+1}$ simultaneously, by giving a combinatorial description corresponding to such polarizations, with a natural dualization. We apply this theory, and study the case of $d=2$ and $d=n-1$ in more detail. Here, we show that there is a one-to-one correspondence between spanning trees of $K_n$ and the maximal polarizations of these ideals.
\end{abstract}

\bibliographystyle{plain}

\maketitle

\section{Introduction}

Let $S=k[x_1,\dots,x_n]$ be the polynomial ring in $n$ variables over a field $k$. Sinefakopoulos \cite{Si}, and Nagel and Reiner \cite{NR} describes a nice way for giving a minimal cellular resolution of $m^d$. The resolution they describe comes from a polarization of the ideal, which we will call the box polarization of $m^d$, or equivalently the box polarization of the ideal $I_d=(x_1,\dots,x_{n'})^d_\mathrm{sq.fr.}$ consisting of all square-free monomials of degree $d$ in the polynomial ring $S'=k[x_1,\dots,x_{n'}]$, where $n'=n+d-1$. We present here the definition of what we mean by a polarization of an ideal, and some basic facts about them. See \cite{Ya} for more details.
\begin{Def}
Let $I$ be an ideal in $S$. A polarization of $I$ is defined as a square-free monomial ideal $\widetilde{I}$ in
$$\widetilde{S}:=k\left[x_1^{(1)},\dots,x_1^{(r_1)},x_2^{(1)},\dots,x_2^{(r_2)},\dots,x_n^{(r_n)}\right]$$
such that the sequence
$$\sigma=\left(x_1^{(1)}-x_1^{(2)}, x_1^{(1)}-x_1^{(3)},\dots, x_1^{(1)}-x_1^{(r_1)},x_2^{(1)}-x_2^{(2)},\dots, x_n^{(1)}-x_n^{(r_n)}\right)$$
is a regular $\widetilde{S}/\widetilde{I}$-sequence, and that $\widetilde{I}\otimes \widetilde{S}/(\sigma)\cong I$. The corresponding homomorphism $\widetilde{I}\rightarrow I$ is called the depolarization of $\widetilde{I}$. An ideal $I$ is said to be maximal polarized if there exists no non-trivial polarization of $I$.
\end{Def}

By this definition, it is clear that the $\mathbf{Z}$-graded Betti numbers of $I$ and $\widetilde{I}$ are the same, and a minimal (cellular) free resolution for $\widetilde{I}$ gives rise to a minimal (cellular) free resolution of $I$.

The box polarization of $m^d$ is the ideal
$$B_{nd} = \left(x_{i_1}^{(1)}x_{i_2}^{(2)}\cdots x_{i_d}^{(d)}\,\middle|\,1\le i_1\le i_2 \le \cdots \le i_d \le n\right).$$

We are interested in studying different polarization of the ideals $m^d$ and $I_d$ in more detail.

In Section \ref{xyz} we show that a minimal free cellular resolution of $(x,y,z)^d$ corresponds to a maximal polarization of the ideal. However this is not the case when $n>3$, since it is possible to show that the Eliahou--Kervaire resolution of $(x_1,x_2,x_3,x_4)^2$, which is known to be cellular (i.e. \cite[Fact 5.2]{batzies2002discrete} or \cite[Theorem 3.4]{peeva2008minimal}), can not occur from any polarization of the ideal.

In Section \ref{sqfr}, we study polarizations of square-free monomial ideals and their Alexander dual. We show in fact that if $\widetilde{I}$ is a polarization of an ideal $I$, then the Alexander dual $D(\widetilde{I})$ is a polarization of the Alexander dual ideal $D(I)$ of $I$. We use this result to study how polarization of the ideals $I_d$ and their Alexander duals $I_{n-d+1}$ are related. We describe these polarizations by partitioning some sets, such that the dual partitioning corresponds to the polarization of the Alexander dual ideal. We also give a criterion for when a partition gives a polarization. We give two natural examples of maximal polarizations which are in some sense self dual.

Finally, in Section \ref{sqfr2}, we examine the special case $d=2$. We will show that there exists a nice classification of maximal polarizations analogous to the results in \cite{Fl}, by analyzing the possible cellular resolutions of the Alexander dual ideal $I_{n-1} = D(I_2)$ instead. We also discuss non-maximal polarization, and how they correspond to edge ideals. We will give a criterion for when there exist polarizations of such edge ideals.

We briefly recall some basic definition. We write $[n]=\{1,2,\dots,n\}$, and a subset $F\subseteq [n]$ is called a face. A simplicial complex is a collection of faces $\Delta$, such that if $F\in \Delta$ and $G\subseteq F$, then $G\in \Delta$. The Stanley--Reisner ideal of the simplicial complex $\Delta$ is the square-free monomial ideal $I_\Delta=\left(x^\sigma\,|\, \sigma\not\in \Delta \right)$ generated by monomials corresponding to non-faces $\sigma$ of $\Delta$. The Stanley--Reisner ring of $\Delta$ is the quotient ring $S/{I_{\Delta}}$. Simplicial complexes comes with a reduced chain complex mapping faces to codimension $1$ faces. More generally, if we already have a chain complex, for instance a minimal free resolution, it would be nice to find a simplicial complex such that our resolution more or less is the reduced chain complex of this. However, this rarely happens, but it can often be done if we introduce the more general polyhedral cell complexes as we will see. This definition and examples can be found in \cite[Chapter 4]{MS}.

\begin{Def}
A polyhedral cell complex $X$ is a finite collection of convex polytopes, called the faces of $X$, satisfying the following two properties:
\begin{itemize}
\item[1.]
If $P$ is a polytope in $X$ and $F$ is a face in $P$, then $F$ is in $X$.
\item[2.]
If $P$ and $Q$ are in $X$, then $P\cap Q$ is a face in both $P$ and $Q$.
\end{itemize}
\end{Def}

A polyhedral cell complex also comes with a reduced chain complex
$$k^{\# F_{-1}} \leftarrow k^{\# F_0} \leftarrow k^{\# F_1} \leftarrow \cdots \leftarrow k^{\# F_d},$$
with basis given by the faces and differential
$$\partial(F)=\sum_{G\text{ facet in } F} \text{sign}(G,F)\cdot F,$$
where sign is determined by an (arbitrarily) orienting of the faces, with $\text{sign}(G,F)=1$ if the orientation on $F$ induces the orientation on $G$, and $-1$ if not.

\begin{Def}
$X$ is said to be a labeled cell complex if its $r$ vertices are labeled by vectors $\mathbf{a}_1,\dots, \mathbf{a}_r$ in $\mathbf{N}^n$ and the faces $F$ are labeled by $\mathbf{a}_F\in \mathbf{N}^n$, where $\mathrm{lcm}(\mathbf{x}^{\mathbf{a}_i}\,|\, i\in F)$. We label the empty face $\emptyset$ with $\mathbf{0}$.

We have the cellular free complex $\mathcal{F}_X$, supported on $X$ given as
$$\mathcal{F}_X=\bigoplus_{F\in X} S(-\mathbf{a}_F)$$
and differential
$$\partial(F)=\sum_{G \text{ facet in } F} \mathrm{sign}(G,F)\mathbf{x}^{\mathbf{a}_F-\mathbf{a}_G} G.$$
$F$ and $G$ are considered both as faces and as basis elements in degree $\mathbf{a}_F$ and $\mathbf{a}_G$. $\mathcal{F}_X$ is a cellular resolution if it is acyclic (homology only in degree $0$).
\end{Def}

\section{Polarizations of $(x,y,z)^d$}\label{xyz}

In the article of Nagel and Reiner \cite{NR}, the authors are interested in producing a minimal free cellular resolution of the ideal $m^d$, and restrict this to a Borel fixed ideal $I\subseteq m^d$, to get a minimal free cellular resolution of $I$. To do this, they introduce what they call the complex of boxes resolution, which they get from a polarization of the ideal $m^d$ (and $m_{\mathrm{sq.fr}}^d)$. This polarization is what we call the box polarization. For the case $n=3$, we know that there are several other possible minimal free cellular resolutions of $m^d$. We want to show that every such minimal free cellular resolution can be obtained by using a suitable polarization of the ideal $m^d$.

The generators of the ideal $(x,y,z)^d$ and the linear relations between them can be arranged in a triangular shaped graph as shown in the figure below for $d=4$.

\newcommand{\trekantA}{\vskip 1cm
\begin{picture}(200,160)
\put(2,2){$_{y^4}$}
\put(10,10){\circle*{2}}
\put(10,10){\line(1,2){80}}

\put(50,10){\line(1,2){60}}
\put(50,10){\circle*{2}}
\put(90,10){\line(1,2){40}}
\put(90,10){\circle*{2}}
\put(130,10){\line(1,2){20}}
\put(130,10){\circle*{2}}

\put(87,175){$_{x^4}$}
\put(90,170){\circle*{2}}
\put(90,170){\line(1,-2){80}}

\put(52,131){$_{x^3y}$}
\put(70,130){\line(1,-2){60}}
\put(70,130){\circle*{2}}
\put(31,91){$_{x^2y^2}$}
\put(50,90){\line(1,-2){40}}
\put(50,90){\circle*{2}}
\put(16,51){$_{xy^3}$}
\put(30,50){\line(1,-2){20}}
\put(30,50){\circle*{2}}

\put(170,2){$_{z^4}$}
\put(170,10){\circle*{2}}
\put(10,10){\line(1,0){160}}

\put(30,50){\line(1,0){120}}
\put(50,90){\line(1,0){80}}
\put(70,130){\line(1,0){40}}

\put(110,130){\circle*{2}}
\put(130,90){\circle*{2}}
\put(150,50){\circle*{2}}

\put(112,131){$_{x^3z}$}
\put(132,91){$_{x^2z^2}$}
\put(152,51){$_{xz^3}$}

\put(70,50){\circle*{2}}
\put(90,90){\circle*{2}}
\put(110,50){\circle*{2}}

\put(48,55){$_{xy^2z}$}
\put(93,95){$_{x^2yz}$}
\put(113,55){$_{xyz^2}$}

\put(42,2){$_{y^3z}$}
\put(82,2){$_{y^2z^2}$}
\put(122,2){$_{yz^3}$}

\end{picture}}

\newcommand{\trekantB}{\vskip 1cm
\begin{picture}(200,160)
\put(25,25){$_{y^3}$}
\put(10,10){\circle*{2}}
\put(10,10){\line(1,2){80}}

\put(50,10){\line(1,2){60}}
\put(50,10){\circle*{2}}
\put(90,10){\line(1,2){40}}
\put(90,10){\circle*{2}}
\put(130,10){\line(1,2){20}}
\put(130,10){\circle*{2}}

\put(87,145){$_{x^3}$}
\put(90,170){\circle*{2}}
\put(90,170){\line(1,-2){80}}

\put(63,103){$_{x^2y}$}
\put(70,130){\line(1,-2){60}}
\put(70,130){\circle*{2}}
\put(65,70){$_{xy}$}
\put(50,90){\line(1,-2){40}}
\put(50,90){\circle*{2}}
\put(43,67){$_{xy^2}$}
\put(30,50){\line(1,-2){20}}
\put(30,50){\circle*{2}}

\put(145,25){$_{z^3}$}
\put(170,10){\circle*{2}}
\put(10,10){\line(1,0){160}}

\put(30,50){\line(1,0){120}}
\put(50,90){\line(1,0){80}}
\put(70,130){\line(1,0){40}}

\put(110,130){\circle*{2}}
\put(130,90){\circle*{2}}
\put(150,50){\circle*{2}}

\put(87,113){$_{x^2}$}
\put(123,67){$_{xz^2}$}

\put(70,50){\circle*{2}}
\put(90,90){\circle*{2}}
\put(110,50){\circle*{2}}

\put(83,67){$_{xyz}$}
\put(105,103){$_{x^2z}$}
\put(105,70){$_{xz}$}

\put(47,32){$_{y^2}$}
\put(85,32){$_{yz}$}
\put(126,32){$_{z^2}$}
\put(63,25){$_{y^2z}$}
\put(103,25){$_{yz^2}$}

\end{picture}}

\begin{center}
\trekantA
\end{center}

The triangles corresponding to three vertices all adjacent to each other can be labeled by the greatest common divisor of its vertices. We see in the figure below that these correspond to a set of up triangles labeled by the generators of $m^{d-1}$, and a set of down triangles labeled by the generators of $m^{d-2}$.

\begin{center}
\trekantB
\end{center}

We denote the simplicial complex consisting of the vertices and edges from the triangular graph and the $2$-cells corresponding to the up triangles and down triangles above by $\Gamma$. This simplicial complex gives rise to a cellular resolution of $m^d$ by labeling the faces by the least common multiple of the monomials corresponding to the vertices. This resolution is not minimal, but we have the following result.

\begin{Theorem}
For every choice of removing exactly one edge from each down triangle in $\Gamma$, there exists a polarization $I$ of $m^d$ such that the corresponding polyhedral cell complex $\Delta$ supports a minimal cellular resolution of $I$ (and therefore also for $m^d$). Furthermore, every minimal cellular resolution of $m^d$ comes from such a polarization.
\end{Theorem}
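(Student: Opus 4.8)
The plan is to prove the two assertions separately: first that each admissible choice of deleted edges yields a polarization whose cellular complex $\Delta$ minimally resolves $m^d$, and then that no other minimal cellular resolutions occur. The organizing observation is that $\Gamma$ fails to be minimal for exactly one reason: a down triangle sitting at a generator $x^a y^b z^c$ of $m^{d-2}$ has vertices $L/x,\,L/y,\,L/z$ with $L=x^{a+1}y^{b+1}z^{c+1}$, and all three of its edges, together with the $2$-cell itself, carry the same label $L$. Hence every unit entry in the differential of $\mathcal F_\Gamma$ sits on an incidence between a down triangle and one of its edges, whereas the up triangles (whose labels strictly dominate those of their edges) cause no trouble. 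Choosing, for each down triangle, one edge to delete is exactly choosing how to cancel that $2$-cell; geometrically the deleted interior edge merges its two adjacent triangles into a convex polygon, and these merged polygons become the $2$-cells of $\Delta$.

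For the construction direction I would fix such a choice and first check that $\Delta$ is a genuine polyhedral cell complex: the deleted edges are interior, the merged regions are convex lattice polygons in the triangular array, and the intersection condition holds because distinct merged regions meet along kept edges or vertices. I would then build the polarization $I$ by assigning to each generator a square-free monomial, using the kept edges of $\Delta$ to propagate a consistent choice of the copies $x_i^{(k)}$ across the disk; the point is that deleting one edge from each down-triangle $3$-cycle removes precisely the potential monodromy, so the assignment is well defined and depolarizes to $m^d$, which by the regular-sequence criterion of the Definition exhibits $I$ as a polarization. Finally I would verify that $\mathcal F_\Delta$, taken with these square-free labels, is a minimal cellular resolution of $I$: acyclicity is checked on the lower intervals $\Delta_{\le \mathbf b}$, which for square-free labels are subcomplexes one shows to be contractible, while minimality is immediate once the square-free labels on incident faces are distinct. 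Depolarizing then yields the minimal cellular resolution of $m^d$.

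For the converse I would start from an arbitrary minimal cellular resolution, supported on some $\Delta'$ of dimension at most $2$ since $\operatorname{pd} m^d = 2$. Because $m^d$ has a linear resolution, the vertices must be the $\binom{d+2}{2}$ generators, every edge must carry a degree-$(d+1)$ label joining two generators that differ by a single ratio $x_i/x_j$ (hence an edge of $\Gamma$), and every $2$-cell a degree-$(d+2)$ label. At each degree-$(d+1)$ monomial $L$ possessing three generator divisors $L/x,\,L/y,\,L/z$, acyclicity of $\Delta'_{\le L}$ forbids all three edges of the corresponding down triangle from being present, while the vanishing of reduced $H_0$ forbids deleting two; so exactly one edge is deleted from each down triangle and none along the boundary. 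This pins the $1$-skeleton of $\Delta'$ to that of such a $\Delta$, and killing $H_1$ forces the $2$-cells to be exactly the bounded merged polygons, so $\Delta'=\Delta$ for the associated choice; extracting the square-free labels then realizes it as the depolarization of the corresponding polarization.

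The main obstacle, I expect, is the uniform verification in the construction direction that $\mathcal F_\Delta$ is exact for every choice --- not merely that each face boundary is filled, but that every lower interval $\Delta_{\le \mathbf b}$ is acyclic, including cycles enclosing several merged polygons. I would handle this through the acyclic matching on $\Gamma$ that pairs each down triangle with its deleted edge and the Batzies--Welker machinery, since the matching is manifestly homogeneous (matched pairs share the label $L$) and its acyclicity follows from the strict increase of labels along all unmatched incidences; passing to the polarization $I$ is exactly what makes the labels distinct and lets me identify the resulting Morse resolution with the polyhedral complex $\Delta$ cleanly.
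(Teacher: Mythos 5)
Your converse direction is essentially the paper's argument (linearity pins the $1$-skeleton inside that of $\Gamma$, the restriction $\mathcal F_{\le \mathbf d}$ to a down triangle forces exactly one deleted edge, and the $2$-cells are then forced), and your discrete-Morse idea for exactness is a legitimate alternative for showing that $m^d$ itself has a minimal cellular resolution on $\Delta$: the matching pairing each down triangle with its deleted edge is homogeneous and acyclic (each edge lies in at most one down triangle, so no alternating cycle can form), and the critical cells are exactly the cells of $\Delta$.

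The genuine gap is the construction of the polarization, which is the actual content of the theorem and of the paper's Subsection on the construction. You dispose of it with ``using the kept edges of $\Delta$ to propagate a consistent choice of the copies $x_i^{(k)}$ across the disk; deleting one edge from each down-triangle $3$-cycle removes precisely the potential monodromy, so the assignment is well defined.'' This is not an argument. First, ``propagation along a kept edge'' is not a well-defined local rule: a kept edge between (the polarizations of) $x^ay^bz^c$ and $x^{a+1}y^{b-1}z^c$ must become a linear relation, which constrains the two labels to agree on the shared parts $x^a,y^{b-1},z^c$ and to differ by single fresh copies of $x$ and $y$; nothing in ``no monodromy'' guarantees that a global square-free assignment satisfying all these simultaneous agreement constraints exists, since the obstruction is not a loop holonomy but the compatibility of nested choices along entire rows (this is exactly what the paper's iterative chains $\mathbf s(i)$ and the divisibility lemma $m_x\mid n_x$ encode, with a genuine case split on $x$-, $y$- and $z$-triangles). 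Second, even given a candidate assignment, you must check both that the deleted edges do \emph{not} yield linear relations (else $\mathcal F_\Delta$ would fail to be a resolution of $I$, whose first syzygies are all linear) and that the result is a polarization; the paper reduces the latter to exhibiting the resolution on $\Delta$, but your Morse argument lives entirely on $\Gamma$ with the labels of $m^d$ and never produces the ideal $I$. Finally, your fallback claim that the lower intervals $\Delta_{\le\mathbf b}$ are contractible ``for square-free labels'' is asserted, not proved; the paper proves acyclicity by an explicit connectivity argument that leans on the divisibility lemma for the constructed polarization. So the existence half of the theorem remains unestablished in your write-up.
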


\begin{proof} We show first that every minimal free cellular resolution of $m^d$ is indeed on this form. This is because we know that the minimal resolution of $m^d$ is linear, so a minimal free cellular resolution of $m^d$ must contain a subgraph of the $1$-skeleton of $\Gamma$. A down triangle labeled $n$ from $m^{d-2}$ consists of the vertices $m_1=nxy, m_2=nxz$ and $m_3=nyz$. We observe that $\mathrm{lcm}(m_1,m_2)=\mathrm{lcm}(m_1,m_3)=\mathrm{lcm}(m_2,m_3)=\mathrm{lcm}(m_1,m_2,m_3)$. Suppose that $\mathcal{F}$ is a minimal free cellular resolution of $m^d$. We must then have that $\mathcal{F}_{\leq \mathbf{d}}$ is acyclic for every multidegree $\mathbf{d}\in \mathbf{Z}^3$ (\cite[Prop. 4.5]{MS}). Letting $\mathbf{d}=\mathrm{deg}(\mathrm{lcm}(m_1,m_2,m_3))$, we see that $\mathcal{F}_{\leq \mathbf{d}}$ consists only of the three vertices from the down triangle, hence it must either contain all edges from the triangle and the $2$-face, or it must contain exactly two edges. But since $\mathrm{lcm}(m_1,m_2)=\mathrm{lcm}(m_1,m_2,m_3)$, the first case would not give a minimal resolution. Hence we must have exactly two edges from each down triangle.

Next, we let $F$ be a $2$-cell labeled by the monomial $m$ of multidegree $\mathbf{d}$. We want to look at the complex $\mathcal{F}_{\leq \mathbf{d}}$. Since the minimal free resolution is linear, we must have that $\mathrm{deg}(m) = d+2$, which means that $\mathcal{F}_{\leq \mathbf{d}}$ is at most supported on vertices of the form:

\begin{center}
\begin{picture}(100,100)
\put(7.5,95){$a$}
\put(10,90){\circle*{2}}

\put(47.5,95){$b$}
\put(50,90){\circle*{2}}

\put(87.5,95){$c$}
\put(90,90){\circle*{2}}

\put(19,47){$d$}
\put(30,50){\circle*{2}}

\put(75,47){$e$}
\put(70,50){\circle*{2}}

\put(47.5,0){$f$}
\put(50,10){\circle*{2}}

\put(10,90){\line(1,0){80}}
\put(30,50){\line(1,0){40}}
\put(30,50){\line(1,2){20}}
\put(70,50){\line(-1,2){20}}
\put(50,10){\line(1,2){40}}
\put(50,10){\line(-1,2){40}}

\end{picture}
\end{center}
So $F$ is supported on a subset of these vertices. We want to show that if $F$ is a supported on a subset of vertices of this form, then the $1$-skeleton of $F$ is a cycle without chordes. So assume that $a$ is in $F$. Then clearly $ab$ and $ad$ are faces of $F$. But then $bd$ can not be a face of $F$, since if $m'$ is the least common multiplum of the monomials corresponding to the down triangle $abd$, and if $\mathbf{d'}$ is the degree of $m'$, then $\mathcal{F}_{\le\mathbf{d'}}$ is just the complex restricted to the down triangle $abd$, which is impossible as we just have shown above. Similar arguments shows that we can not have the vertex $c$ and the chord $be$, and we can not have the vertex $f$ and the chord $de$. 

The proof will now follow from the construction of the polarization for such a configuration which we give in Subsection \ref{construction} below.
\end{proof} 
\subsection{The construction}\label{construction}

Let $M_d$ denote the set of minimal generators of the ideal $m^d$. So the set of down triangles are in one-to-one correspondence to $M_{d-2}$ by taking the greatest common divisor of the monomials in its vertices. Now suppose we remove exactly one edge from each of the down triangles as above, and we denote the correspondig polyhedral cell complex by $\Delta$. That is, the $0$-cells and $1$-cells of $\Delta$ are the graph obtained by removing one edge from each down triangle, and the $2$-cells are all the internal regions of this planar graph. Let $m\in M_{d-2}$ correspond to a down triangle $mxy,mxz,myz$. If the edge removed from this triangle consists of the vertices $mxy$ and $mxz$, then $m$ is called an $x$-triangle, if the edge removed consists of the vertices $mxy$ and $myz$ it is called an $y$-triangle, and finally if it consists of $mxz$ and $myz$ it is called a $z$-triangle.

\begin{center}
\begin{picture}(210,70)
\put(22,25){\line(1,2){20}}
\put(22,25){\line(-1,2){20}}
\put(22,25){\circle*{2}}
\put(42,65){\circle*{2}}
\put(2,65){\circle*{2}}
\put(0,5){$x$-triangle}

\put(102,25){\line(1,2){20}}
\put(82,65){\line(1,0){40}}
\put(102,25){\circle*{2}}
\put(122,65){\circle*{2}}
\put(82,65){\circle*{2}}
\put(80,5){$y$-triangle}

\put(182,25){\line(-1,2){20}}
\put(162,65){\line(1,0){40}}
\put(182,25){\circle*{2}}
\put(202,65){\circle*{2}}
\put(162,65){\circle*{2}}
\put(160,5){$z$-triangle}

\end{picture}
\end{center}

We can now describe how we construct the polarization corresponding to this polyhedral cell complex, which gives rise to a minimal free cellular resolution of $m^d$.

First of all, we polarize $z^{d-1}x$ to $z^{d-1}x_1$. Then we assume that we have polarized the $x$-variables in the monomials from $z^{d-i}M_{i}(x,y)$ in the variables $x_1,\dots, x_i$, and that this polarization corresponds to a maximal chain $\mathbf{s}(i)$ of faces $\emptyset=s_{i,0} \subset s_{i,1}\subset \cdots \subset s_{i,i-1} \subset s_{i,i} = [i]$, such that $x^{i-j}$ in $x^{i-j}y^jz^{d-i}$ is polarized as $\prod_{\underset{k\in [i],}{k \not\in s_{i,j}}}x_k.$ \\

\begin{Example}
To make things clearer, we illustrate what we mean by an example. If $i=3$, and if the monomials $x^3z^{d-3},x^2yz^{d-3},xy^2z^{d-3},y^3z^{d-3}$ are already polarized to for instance $x_1x_2x_3z^{d-3}, x_1x_3yz^{d-3}, x_3y^2z^{d-3}, y^3z^{d-3}$. Then this polarization corresponds to the sequence $\mathbf{s}(3)$ which is $\emptyset \subset \{2\} \subset \{1,2\} \subset \{1,2,3\}$. 
\end{Example}
\vspace{0.2cm}

We now continue and construct the sequence $\mathbf{s}(i+1)$ iteratively as follows. First we construct a sequence $\mathbf{s}(i)'$ by letting $s_{i,j}'=s_{i,j}\cup\{i+1\}$. Next, we let $s_{i+1,0}=\emptyset$. For $j=0,1,\dots, i$, if $j<i$ and if $x^{i-1-j}y^jz^{d-i-1}$ is an $x$-triangle we let $s_{i+1,j+1}=s_{i+1,j}\cup (s_{i,j+1}'\setminus s_{i,j}')$. Otherwise we let $s_{i+1,j+1}=s_{i,j}'$. We now polarize the monomials in $z^{d-i-1}M_{i+1}(x,y)$ by the sequence $\mathbf{s}(i+1)$.

The polarization of the $y$-variables and the $z$-variables are done in exactly the same way and the details are obmitted. \\

\begin{Example}
Consider the following example:

\begin{center}
\begin{picture}(200,200)
\put(2,2){$_{y^4}$}
\put(10,10){\circle*{2}}
\put(10,10){\line(1,2){80}}

\put(50,10){\line(1,2){60}}
\put(50,10){\circle*{2}}
\put(90,10){\circle*{2}}
\put(130,10){\circle*{2}}

\put(87,175){$_{x^4}$}
\put(90,170){\circle*{2}}
\put(90,170){\line(1,-2){80}}

\put(52,131){$_{x^3y}$}
\put(70,130){\line(1,-2){60}}
\put(70,130){\circle*{2}}
\put(31,91){$_{x^2y^2}$}
\put(50,90){\line(1,-2){40}}
\put(50,90){\circle*{2}}
\put(16,51){$_{xy^3}$}
\put(30,50){\circle*{2}}

\put(170,2){$_{z^4}$}
\put(170,10){\circle*{2}}
\put(10,10){\line(1,0){160}}

\put(30,50){\line(1,0){120}}
\put(90,90){\line(1,0){40}}

\put(110,130){\circle*{2}}
\put(130,90){\circle*{2}}
\put(150,50){\circle*{2}}

\put(112,131){$_{x_1x_2x_3z}$}
\put(132,91){$_{x_1x_2z^2}$}
\put(152,51){$_{x_1z^3}$}

\put(70,50){\circle*{2}}
\put(90,90){\circle*{2}}
\put(110,50){\circle*{2}}

\put(75,55){$_{x_1y^2z}$}
\put(93,95){$_{x_1x_2yz}$}
\put(113,55){$_{x_1yz^2}$}

\put(42,2){$_{y^3z}$}
\put(82,2){$_{y^2z^2}$}
\put(122,2){$_{yz^3}$}

\end{picture}
\end{center}
Here we have polarized the $x$-variables from $z^4M_0(x,y)$ to $z^1M_3(x,y)$ and we now want to polarize the $x$-variables in the monomials of $z^0 M_4(x,y)$. The sequence $\mathbf{s}(3)$ corresponding to the polarization in $z^1M_3(x,y)$ is the sequence $\emptyset \subset \{3\} \subset \{2,3\} \subset \{1,2,3\}$. We construct the sequence $\mathbf{s}'(3)$ as $\{4\} \subset \{3,4\} \subset \{2,3,4\} \subset \{1,2,3,4\}$, and the sequence $\mathbf{s}(4)$ as follows: Let $s_{4,0}=\emptyset$. Next, since the first down triangle, i.e. the triangle labeled $x^2$ corresponding to $j=0$, is an $x$-triangle we have that $s_{4,1}=s_{4,0}\cup (s_{3,1}'\setminus s_{3,0}')=\{3\}$. The second down triangle labeled $xy$ is also an $x$-triangle. We therefore have that $s_{4,2}=s_{4,1}\cup (s_{3,2}'\setminus s_{3,1}') = \{2,3\}$. The third and last down triangle labeled $y^2$ is not an $x$-triangle. We therefore have that $s_{4,3}=s_{3,2}'=\{2,3,4\}$, and finally also that $s_{4,4}=\{1,2,3,4\}$. This means that $x^4$ is polarized to $x_1x_2x_3x_4$, $x^3y$ to $x_1x_2x_4$, $x^2y^2$ to $x_1x_4y^2$ and $xy^3$ to $x_1y^3$.
\end{Example}

So for every such polyhedral cell complex $\Delta$, we create this ideal $I$ and claim it is a polarization of $m^d$, and that it has a minimal free resolution supported on $\Delta$. Since the generators of $I$ depolarizes to the generators of $m^d$, it will follow that $I$ is a polarization of $m^d$ if we can show that $I$ has a minimal free resolution supported on $\Delta$. This is because this resolution depolarizes to a minimal free resolution of $m^d$. To show that $I$ has a minimal free resolution supported on $\Delta$, we need three technical lemmas.

\begin{Lemma}\label{Observe}
Let $m=x^ay^bz^c$ and $n=x^{a'}y^{b'}z^c$ be two generators of $m^d$ with $a<a'$. If $m_x$ and $n_x$ denotes the corresponding polarizations of $x^a$ and $x^{a'}$ in $I$, then $m_x|n_x$. Furthermore, if $m'=x^ay^bz^c$ and $n'=x^{a'}y^bz^{c'}$ are two generators of $m^d$ with $a<a'$, and if $m'_x$ and $n'_x$ denotes the corresponding polarizations of $x^a$ and $x^{a'}$ in $I$, then $m'_x|n'_x$. Since the construction of the polarization of $y$ and $z$ is done precisely as for $x$, similar results also holds for the $y$ and $z$ part.
\end{Lemma}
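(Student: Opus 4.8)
The plan is to translate both divisibility claims into inclusions of index sets and then read these inclusions off the recursion defining the chains $\mathbf{s}(i)$. Write $U_{i,j}:=[i]\setminus s_{i,j}$ for the set of indices occurring in the polarization of $x^{i-j}$ inside $x^{i-j}y^jz^{d-i}$, so that this polarization equals $\prod_{k\in U_{i,j}}x_k$ and $|U_{i,j}|=i-j$. A generator $x^{\alpha}y^{\beta}z^{\gamma}$ of $m^d$ lies in the layer $z^{\gamma}M_{d-\gamma}(x,y)$, and matching exponents shows its $x$-part is polarized by $U_{d-\gamma,\,\beta}$. Since $\prod_{k\in U}x_k\mid\prod_{k\in U'}x_k$ is equivalent to $U\subseteq U'$, both divisibilities reduce to set inclusions among the $U_{i,j}$.

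For the first statement, $m$ and $n$ share the $z$-exponent $c$, so they sit in the same layer $i=d-c$, with $m$ corresponding to $j=b$ and $n$ to $j'=b'$; since $a<a'$ and $a+b=a'+b'=i$ we get $j>j'$. As $\mathbf{s}(i)$ is a chain, $s_{i,j'}\subseteq s_{i,j}$, hence $U_{i,j}=[i]\setminus s_{i,j}\subseteq[i]\setminus s_{i,j'}=U_{i,j'}$, which is exactly $m_x\mid n_x$. This part is immediate from the chain structure inside a single layer.

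The substance is the second statement, where $m'$ and $n'$ share the $y$-exponent $b$ but have $z$-exponents $c>c'$, placing them in layers $i=d-c<i'=d-c'$ with the common value $j=b$. The key claim is the single-step inclusion $U_{i,j}\subseteq U_{i+1,j}$ for all $i,j$, from which $U_{i,j}\subseteq U_{i',j}$ follows by transitivity and hence $m'_x\mid n'_x$. I would prove the claim by induction on $j$. The base case is $U_{i,0}=[i]\subseteq[i+1]=U_{i+1,0}$. For the step, let $\{e\}=s_{i,j}\setminus s_{i,j-1}$ be the element added at the $j$-th stage of $\mathbf{s}(i)$ (using that $\mathbf{s}(i)$ is a maximal chain), so $U_{i,j}=U_{i,j-1}\setminus\{e\}$, and note that $s_{i,j}'\setminus s_{i,j-1}'=\{e\}$ as well since both primed sets just adjoin $i+1$. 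The recursion for $\mathbf{s}(i+1)$ then splits into two cases: if the relevant triangle is an $x$-triangle and $j\le i$, then $s_{i+1,j}=s_{i+1,j-1}\cup\{e\}$, so $U_{i+1,j}=U_{i+1,j-1}\setminus\{e\}$, and deleting the same element $e$ from both sides of the inductive hypothesis $U_{i,j-1}\subseteq U_{i+1,j-1}$ gives $U_{i,j}\subseteq U_{i+1,j}$; otherwise $s_{i+1,j}=s_{i,j-1}\cup\{i+1\}$, whence $U_{i+1,j}=[i]\setminus s_{i,j-1}=U_{i,j-1}\supseteq U_{i,j}$.

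The hard part will be exactly this cross-layer step: one must verify that the element $e$ removed from the chain at stage $j$ is literally the same index in layer $i$ and in layer $i+1$ (this is what legitimizes deleting $e$ from both sides of the inclusion), and one must keep the two branches of the recursion straight, together with the boundary index $j=i+1$, where the "otherwise" branch forces $s_{i+1,i+1}=[i+1]$. Once the inclusion $U_{i,j}\subseteq U_{i+1,j}$ is in hand, both divisibility statements and their $y$- and $z$-analogues follow formally by the same argument, since the construction for $y$ and $z$ is identical.
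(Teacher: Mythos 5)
Your proof is correct and follows essentially the same route as the paper: the first divisibility is immediate from the chain structure of $\mathbf{s}(i)$ within a fixed $z$-layer, and the second reduces to a single-step inclusion proved by induction on $j$ using the two branches of the recursion. Your formulation $U_{i,j}\subseteq U_{i+1,j}$ is just the complement of the paper's inductive statement $s_{i+1,j}\cap [i]\subseteq s_{i,j}$, so the two arguments are equivalent.
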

\begin{proof}
In the first case, the $x$-variables are polarized according to the sequence $s_{i,0}\subseteq s_{i,1}\subseteq \cdots \subseteq s_{i,{i-1}} \subseteq s_{i,i}$, where $i = d-c$ so the result clearly holds by this construction. In the second case, the $x$-variables are polarized according to the sequence $s_{i,i},s_{i+1,i},s_{i+2,i},\dots,s_{d,i}$, where $i=b$, and the result follows if we can show that $s_{i+1,j}\cap [i] \subseteq s_{i,j}$. So suppose that we have shown that $s_{i+1,k}\cap [i] \subseteq s_{i,k}$. Then either $s_{i+1,k+1} = s_{i,k}'$ and $s_{i+1,k+1}\cap[i]=s_{i,k}\subseteq s_{i,k+1}$, or $s_{i+1,k+1}=s_{i+1,k}'\cup (s_{i,k+1}'\setminus s_{i,k}')$ and $s_{i+1,k+1}\cap[i] = s_{i+1,k}\cup (s_{i,k+1}\setminus s_{i,k})$, but $s_{i+1,k}\subseteq s_{i,k}\subseteq s_{i,k+1}$ and $(s_{i,k+1}\setminus s_{i,k})\subseteq s_{i,k+1}$ so $s_{i+1,k+1}\cap[i]\subseteq s_{i,k+1}$. For any $i$, we have that $s_{i,0}=\emptyset$, so by induction, the result holds for all $i$ and $j$.

Similar arguments on the polarization of $y$ and $z$ completes the proof.
\end{proof}

\begin{Lemma}\label{lemmalinearerelasjoner}
If $I$ is the polarization given above corresponding to the polyhedral cell complex $\Delta$, then there is a one-to-one correspondence between the edges (i.e. $1$-cells) of $\Delta$ and the linear relations between the generators of $I$. 
\end{Lemma}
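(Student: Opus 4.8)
The plan is to translate ``linear relation'' into a purely combinatorial condition on the polarized generators and then to match that condition edge by edge against the construction. Since $I$ is generated in the single degree $d$ by square-free monomials, a pair of generators $g,g'$, viewed as the $d$-element sets of variables dividing them, spans a linear first syzygy exactly when $\deg\mathrm{lcm}(g,g')=d+1$, that is, when $|g\setminus g'|=|g'\setminus g|=1$ (equivalently, their symmetric difference has cardinality $2$; note this cardinality is automatically even because $\deg g=\deg g'=d$). I would set up two maps and show they are mutually inverse: from an edge $\{v,w\}$ of $\Delta$ to the pair $\{g_v,g_w\}$ of generators it joins, and from a linear relation $\{g,g'\}$ to the edge of $\Gamma$ joining the depolarized monomials $\{\bar g,\bar g'\}$. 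The second map is well defined: depolarization carries $\mathrm{lcm}(g,g')$ to a multiple of $\mathrm{lcm}(\bar g,\bar g')$ of no larger degree, so $\deg\mathrm{lcm}(\bar g,\bar g')\le d+1$, and since $\bar g\ne \bar g'$ are distinct degree-$d$ monomials this forces $\deg\mathrm{lcm}(\bar g,\bar g')=d+1$, i.e. $\{\bar g,\bar g'\}$ is an edge of the triangular graph $\Gamma$. Thus every linear relation lives over an edge of $\Gamma$, and the whole content of the lemma is to decide, for each edge of $\Gamma$, whether the polarized endpoints realize symmetric difference exactly $2$, and to verify that the edges for which they do are precisely those of $\Delta$.

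The heart of the argument is a case analysis over the three types of edges of $\Gamma$, according to which variable is fixed along the edge. Consider a $yz$-type edge, fixing $x$ and moving one unit between $y$ and $z$; these are precisely the edges $\{mxy,mxz\}$ running along the down triangles, together with the boundary edges on which $x$ does not appear. Both endpoints then share the same $x$-degree and have $y$- and $z$-degrees differing by one. Applying Lemma~\ref{Observe} in its form for the $y$-variables, with the common $x$-degree in the role of the fixed variable, shows that the two $y$-polarizations are nested and differ by a single $y$-variable; applying Lemma~\ref{Observe} in its form for the $z$-variables, again with the common $x$-degree fixed, shows the two $z$-polarizations are nested and differ by a single $z$-variable. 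Hence the symmetric difference of the two polarized generators equals $2$ \emph{if and only if} their $x$-polarizations coincide. Reading off the recursion, the ``Otherwise'' branch $s_{i+1,j+1}=s_{i,j}'$ gives $[i+1]\setminus s_{i+1,j+1}=[i]\setminus s_{i,j}$, i.e. identical $x$-parts, whereas the ``$x$-triangle'' branch replaces $s_{i,j}'$ by $s_{i+1,j}\cup(s_{i,j+1}'\setminus s_{i,j}')$ and makes the $x$-parts differ. So a $yz$-type edge carries a linear relation precisely when it is \emph{not} the edge deleted from an $x$-triangle, that is, precisely when it survives in $\Delta$. By the cyclic symmetry of the construction in $x,y,z$ (the $y$- and $z$-polarizations being defined in the same way), the identical dichotomy holds for $xz$-type edges and $y$-triangles, and for $xy$-type edges and $z$-triangles.

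Putting the two directions together, an edge of $\Gamma$ carries a linear relation of $I$ if and only if it is not the edge removed from its down triangle, i.e. if and only if it belongs to $\Delta$; combined with the first paragraph, where every linear relation was shown to lie over an edge of $\Gamma$, this makes the two maps above mutually inverse and yields the claimed bijection. The step I expect to be the main obstacle is the verification, inside the ``$x$-triangle'' branch, that the two $x$-polarizations genuinely differ, so that the deleted edge becomes \emph{non}-linear rather than merely possibly linear. This is exactly where one must track the interaction of the chains across adjacent rows, using the containment $s_{i+1,k}\cap[i]\subseteq s_{i,k}$ from the proof of Lemma~\ref{Observe}; with that containment in hand, a short induction on $j$ shows that the element $i+1$ survives in $[i+1]\setminus s_{i+1,j+1}$ exactly when an $x$-triangle branch has been taken, so that the $x$-parts of the two endpoints of the deleted edge differ in at least two variables and, together with the forced differences of one $y$- and one $z$-variable, the symmetric difference jumps to at least $4$.
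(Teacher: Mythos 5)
Your argument is essentially the paper's: you check, edge by edge, that a surviving edge joins polarized generators whose parts in the fixed variable coincide and whose parts in the two moving variables each differ by exactly one factor (via Lemma~\ref{Observe}), while the deleted edge of an $x$-triangle forces the $x$-parts apart and hence a symmetric difference of at least $4$; your preliminary reduction showing that every linear relation must lie over an edge of $\Gamma$ makes explicit a point the paper leaves implicit. The one imprecision is your criterion for the $x$-triangle branch: the element $i+1$ remains in $[i+1]\setminus s_{i+1,j+1}$ only when \emph{every} step up to $j$ has taken the $x$-triangle branch, not whenever the step at $j$ does, so tracking $i+1$ alone does not show the two $x$-parts differ in general. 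The fix is immediate: in the $x$-triangle branch $s_{i+1,j+1}$ contains the single element $e=s_{i,j+1}\setminus s_{i,j}$, which lies in $[i]\setminus s_{i,j}$ but not in $[i+1]\setminus s_{i+1,j+1}$; since both $x$-parts have cardinality $i-j$, they differ, and the symmetric difference is at least $2$ as required.
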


\begin{proof}
First, we verify that there can not be a linear relation between two monomials where an edge is removed. Without loss of generality, we may assume that the edge corresponds to an $x$-triangle, and the two vertices are labeled $x^jy^{i}z^{d-i-j}$ and $x^jy^{i+1}z^{d-i-j-1}$. But then by the construction, $x^j$ is polarized different in the two monomials and since the monomials have different degrees in $y$ and $z$ it also have to have different polarizations in these variables. That means that there can not be a linear relation between them in $I$.

Next, we verify that there are linear relations between the generators connected by an edge of the outer boundary of $\Delta$ (i.e. the edges not in any down triangle). Again, we may assume that we are on the boundary containing the generators in $M_d(x,z)$ or $M_d(x,y)$. But again, by the construction, the polarization of the $x$-variables correspond either to the sequence $x_1,x_1x_2,\dots,x_1\cdots x_d$ or a sequence $\mathbf{s}(d)$, but in either case two consecutive monomials only differ by one $x$-variable. Applying the same argument to the $y$ or $z$-variable shows that two consecutive monomials also only differ by one $y$ or $z$-variable. Hence we will have a linear relation between them.

Finally, we will have to verify that an inner edge also corresponds to a linear relation between the monomials of its vertices. Again, we can assume without loss of generality that the vertices are labeled by polarizations of $x^jy^iz^{d-i-j}$ and $x^jy^{i+1}z^{d-i-j-1}$ and that the corresponding down triangle is not an $x$-triangle. So $x^j$ is polarized to the same product of $x$-variables in both monomials, and because of Lemma \ref{Observe}, we can apply the same argument as above for the remaining polarization of the $y$ and $z$-variables, and it follows that there is a linear relation between the two monomials in $I$.
\end{proof}

\begin{Lemma}
If $I$ and $\Delta$ are as above, and if $\mathcal{F}$ is the free cellular complex of $I$ supported on $\Delta$, then for any two generators $m$ and $n$ of $I$, the complex $\mathcal{F}_{\le \mathrm{lcm}(m,n)}$ is acyclic. Hence, $\mathcal{F}$ is a minimal free resolution of $I$. 
\end{Lemma}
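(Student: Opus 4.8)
The plan is to apply the standard acyclicity criterion for cellular resolutions \cite[Prop. 4.5]{MS}: the complex $\mathcal{F}$ resolves $I$ exactly when, for every $\mathbf{b}$ in the lcm-lattice of $I$, the subcomplex $\Delta_{\le\mathbf{b}}$ of faces whose label divides $\mathbf{x}^{\mathbf{b}}$ is $k$-acyclic. Two reductions are immediate and I would record them first. Since $\Delta$ is drawn as a planar disk we have $Z_2(\Delta)=0$, and because the $2$-chains of any $\Delta_{\le\mathbf{b}}$ sit inside $C_2(\Delta)$ compatibly with $\partial_2$, we get $\tilde{H}_2(\Delta_{\le\mathbf{b}})=0$ for free. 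Moreover the vertex labels have degree $d$, the edge labels degree $d+1$ (Lemma \ref{lemmalinearerelasjoner}), and the $2$-cell labels degree $d+2$ by linearity, so no face carries the same label as a face it contains; hence once $\mathcal{F}$ is shown to be a resolution, minimality is automatic. It therefore remains to prove that each $\Delta_{\le\mathbf{b}}$ is connected and satisfies $\tilde{H}_1=0$.

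The reduction to pairwise least common multiples rests on one structural observation: by construction each edge label is the lcm of its two endpoints, and each $2$-cell label is the lcm of its vertices, which equals the lcm of its bounding edge labels. Consequently a $2$-cell lies in $\Delta_{\le\mathbf{b}}$ as soon as all of its boundary edges do. For connectivity this gives the reduction directly: if $\Delta_{\le\mathbf{b}}$ were disconnected, then two generators $m,n$ in different components would, since $\mathrm{lcm}(m,n)\le\mathbf{b}$, already lie in different components of the smaller complex $\Delta_{\le\mathrm{lcm}(m,n)}$; so connectivity at every $\mathbf{b}$ is equivalent to connectivity at the pairwise lcm's. For $\tilde{H}_1$, suppose a $1$-cycle $\gamma\subseteq\Delta_{\le\mathbf{b}}$ fails to bound. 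Then $\gamma$ encloses, in the planar disk, a $2$-cell $F\notin\Delta_{\le\mathbf{b}}$, so some edge $e=\{u,w\}$ of $F$ has $\mathbf{a}_e\not\le\mathbf{b}$; since $\mathbf{a}_e=\mathrm{lcm}(u,w)$, at least one endpoint, say $u$, is a generator with $u\nmid\mathbf{x}^{\mathbf{b}}$ that is geometrically enclosed by $\gamma$. Thus $\tilde{H}_1(\Delta_{\le\mathbf{b}})=0$ will follow once we know that $S_{\mathbf{b}}$, the set of generators $g$ of $I$ with $g\mid\mathbf{x}^{\mathbf{b}}$, occupies a hole-free region of the planar layout, and the case $\mathbf{b}=\mathrm{lcm}(m,n)$ is the representative instance.

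The main work, and the step I expect to be the obstacle, is to describe $S_{\mathbf{b}}$ explicitly and show it has no enclosed missing vertex, which I would do with Lemma \ref{Observe}. That lemma nests the polarized $x$-, $y$-, and $z$-parts of the generators along the rows and diagonals of the triangular array according to the chains $\mathbf{s}(i)$; translating the condition $g\mid\mathbf{x}^{\mathbf{b}}$ into these nested divisibilities shows that $S_{\mathbf{b}}$ is cut out by coordinatewise, staircase-type inequalities, so it is a connected union of full triangles and rhombi of $\Delta$ with no enclosed missing vertex. For $\mathbf{b}=\mathrm{lcm}(m,n)$ this becomes a finite case analysis on the relative position of $m$ and $n$ in the array and on the $x$-, $y$-, $z$-types of the down triangles separating them; in each case $\Delta_{\le\mathrm{lcm}(m,n)}$ turns out to be a single vertex, an edge, a path, or a convex block of triangles and rhombi, all contractible. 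The delicate point is checking that deleting the prescribed edge from each down triangle never disconnects such a block nor opens a one-dimensional hole, and it is precisely Lemma \ref{Observe} that keeps the two incident strands of each rhombus glued together. Once the pairwise subcomplexes are contractible and the hole-freeness of $S_{\mathbf{b}}$ is established, \cite[Prop. 4.5]{MS} shows $\mathcal{F}$ is a resolution, and the degree count of the first paragraph makes it minimal.
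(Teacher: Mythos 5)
Your overall strategy is the same as the paper's: invoke the acyclicity criterion of \cite[Prop.~4.5]{MS}, note that minimality is automatic from the degrees of the labels, dispose of the top homology by planarity, and reduce everything to showing that the set $S_{\mathbf{b}}$ of generators dividing $\mathbf{x}^{\mathbf{b}}$ spans a connected, hole-free region of the triangular array. Your treatment of $\widetilde{H}_2$ and your reduction of $\widetilde{H}_1$ to the absence of an enclosed missing vertex are in fact more explicit than the paper's, which simply asserts that by the construction of $\Delta$ the complex $\mathcal{F}_{\le\mathbf{b}}$ can only have homology in degree $0$, so that everything rests on connectivity.

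However, the decisive step is missing. You write that translating $g\mid\mathbf{x}^{\mathbf{b}}$ through Lemma \ref{Observe} ``shows that $S_{\mathbf{b}}$ is cut out by staircase-type inequalities'' and that the relevant subcomplexes ``turn out to be'' contractible, but this is precisely the content of the lemma and it is not automatic: the polarizations of the $x$-, $y$-, $z$-parts are nested only along the two directions recorded in Lemma \ref{Observe} (fixed $z$-degree, and fixed $y$-degree), not along arbitrary lines of the array, so divisibility by $\mathbf{x}^{\mathbf{b}}$ does not obviously carve out a coordinatewise region, and the answer genuinely depends on the $x$-, $y$-, $z$-types of the intervening down triangles. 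The paper's proof consists exactly of the verification you defer: given two generators $m_1,m_2$ dividing $\mathrm{lcm}(m,n)$, depolarizing to $x^iy^jz^k$ and $x^{i'}y^{j'}z^{k'}$, it exhibits, by a case analysis on the sign pattern of $(i-i',j-j',k-k')$ and on the type of the down triangle at $m_1$, a neighbour $g_1$ or $g_2$ of $m_1$ inside that down triangle which again divides $\mathrm{lcm}(m_1,m_2)$ and is (eventually) closer to $m_2$; Lemma \ref{Observe} is what guarantees that the new $z$-variables (resp.\ $x$- or $y$-variables) picked up by $g_1$ or $g_2$ already occur in $m_2$ (resp.\ $m_1$). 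Iterating produces a path whose steps are genuine edges of $\Delta$ by Lemma \ref{lemmalinearerelasjoner}. Until you carry out this case analysis, or an equivalent explicit description of $S_{\mathbf{b}}$, your argument establishes only the reductions, not the lemma.
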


\begin{proof}
By the construction of $\Delta$, it is easy to see that the complex $\mathcal{F}_{\le \mathbf{d}}$ can only have homology if it contains two vertices not connected by a path. That is, it can only have homology in degree $0$. The result will follow if we can show that any two generators are connected by a path of linear relations.

Suppose that $m_1$ is a generator of $I$ that depolarizes to $x^iy^jz^k$ and that $m_2$ is a generator that depolarizes to $x^{i'}y^{j'}z^{k'}$ and suppose that $m_1$ and $m_2$ divides $\mathrm{lcm}(m,n)$, i.e. they correspond to vertices in $\mathcal{F}_{\le \mathrm{lcm}(m,n)}$. We will show that there is a path of linear relations between $m_1$ and $m_2$ by showing that there is at least one other generator of $I$ in the same down triangle as $m_1$, closer $m_2$ (or visa versa), which also divides $\mathrm{lcm}(m_1,m_2)$ (and therefore also $\mathrm{lcm}(m,n))$. By iterating this process the result will follow because of Lemma \ref{lemmalinearerelasjoner}. We have the following possibilities:

\begin{itemize}
\item[1.] $i>i'$, $j>j'$ and $k<k'$. In this case we claim that the polarization of either $x^{i-1}y^jz^{k+1}$, which we call $g_1$, or $x^iy^{j-1}z^{k+1}$, which we call $g_2$, divides $\mathrm{lcm}(m_1,m_2)$. This is because $(m_1,g_1,g_2)$ forms a down triangle which either is not an $x$-triangle, in which case $g_2$ will be polarized by the same $x$-variables as $m_1$, a subset of the $y$-variables from $m_1$ and a subset of the $z$-variables of $m_2$. And likewise if the down triangle is not a $y$-triangle, then $g_1$ is polarized by a subset of the $x$-variables from $m_1$, the same $y$-variables and a subset of the $z$-variables of $m_2$. The reason $g_1$ and $g_2$ are polarized by a subset of the $z$-variables that occur in the polarization of $m_2$ (and similar for the claim on the $x$ and $y$-variables) is because of Lemma \ref{Observe}. So if we fix the degree of $x$ or $y$ then two consecutive generators are polarized in the $z$-variables according to a subset $\sigma \subset \sigma'$ which means that the variables in the first polarization is a subset of the variables in the second. By the assumption, it is possible to first fix $i$ and follow consecutive generators untill we reach the generator $x^iy^{j'}z^{k''}$, and then fix $j'$ and follow consecutive generators untill we reach $m_2$.

\item[2.] $i=i'$, $j>j'$ and $k<k'$. In this case we also claim that $g_1$ or $g_2$, as in case $1.$ will divide $\mathrm{lcm}(m_1,m_2)$. If the down triangle $(m_1,g_1,g_2)$ is not an $x$-triangle, it follows from the same reasons as above. So suppose that it is an $x$-triangle. It now follows by the same argument as above that $g_2$ is polarized by the same $y$-variables as $m_1$ and a subset of the $x$-variables of $m_1$. But since the down triangle is an $x$-triangle, we know that $g_2$ and $g_1$ are polarized by the same $z$-variables, and $g_1$ is polarized by a subset of the $z$-variables of $m_2$ by the same reason as above. It therefore follows that $g_2$ divides $\mathrm{lcm}(m_1,m_2)$.

\item[3.] Any case similar as above, but with possible $m_1$ and $m_2$ or some variables shifted. Then a similar argument as above can be used to find two other similar generators $g_1'$ and $g_2'$ also in $\mathrm{lcm}(m_1,m_2)$.

\end{itemize}

It is clear that this process will lead to a path of linear relations between the two generators. For instance, say that the distance between two generators is given as $|i-i'|+|j-j|+|k-k'|$. Then the first case will obviously find a generator in $\mathrm{lcm}(m_1,m_2)$ closer to $m_2$. In the second case, we will either find a generator in $\mathrm{lcm}(m_1,m_2)$ closer to $m_2$ or we will find a generator with similar distance to $m_2$, but a generator that will give rise to a situation of case $1$.

This means that in $\mathcal{F}_{\le \mathrm{lcm}(m_1,m_2)}$, any two vertices are connected by a path, which means that there is no homology in degree $0$, and that it is acyclic.
\end{proof}

\begin{Remark}
In the article \cite{dochtermann2012tropical}, the authors produces similar cellular resolutions for the ideal $m^d$. The construction they use comes from a tropical hyperplane arrangement, and their constructions extends the construction made by of Sinefakopoulos in \cite{Si}. Their construction also works for $n\ge 3$, and it should be interesting to investigate if their construction also gives rise to polarizations in general.
\end{Remark}

\section{Polarizations of $(x_1,\dots, x_n)^d_\mathrm{sq.fr}$ and its Alexander dual}\label{sqfr}

When studying polarizations of the ideals $m^d$, it is often easier to study polarizations of the related square-free ideals $I_d=(x_1,\dots, x_n)^d_\mathrm{sq.fr}$. Since these ideals are square-free, they correspond to simplicial complexes, and are in some sense more combinatorial to work with. These ideals also have the property that their Alexander dual $D(I_d)=I_{n-d+1}$ is of the same type. We recall that if $\Delta$ is a simplicial complex on the vertices $[n]$, then the Alexander dual $D(\Delta)=\{F\,|\,[n]\setminus F\not\in \Delta\}$. Equivalently, if $I=I_\Delta$ is a square-free monomial ideal corresponding to a simplicial complex $\Delta$, then $D(I)=I_{D(\Delta)}$ is called the Alexander dual of $I$. If $I$ is generated by the monomials $m_{\sigma_j} = \prod\limits_{i\in \sigma_j} x_i$, it is straight forward to verify that
$$D(I) = \left(n_\tau\,\middle| \, \tau\cap \sigma_j\neq \emptyset \, \forall j\right),$$
where $n_\tau = \prod\limits_{i\in \tau} x_i$. We will see below that if $\widetilde{I}$ is a polarization of $I$, then $D(\widetilde{I})$ is a polarization of $D(I)$. We give a combinatorial description of the polarizations $\widetilde{I_d}$ of $I_d$, which has a natural duality that corresponds to the polarizations $D(\widetilde{I_d})$ of the Alexander duals $D(I_d)$. Finally, we also describe two special maximal polarization which are self dual, in the sense that their Alexander dual are polarizations of the same type. The first is the natural box polarization, and the other one is a natural polarization which actually is standard polarization when $d=2$.

\begin{Theorem}
Let $I$ be a square-free monomial ideal, and let $D(I)$ be its Alexander dual ideal. If $\widetilde{I}$ is a polarization of $I$, then $D(\widetilde{I})$ is a polarization of $D(I)$.
\end{Theorem}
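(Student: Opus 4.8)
The plan is to check the three defining properties of a polarization for $D(\widetilde I)$ using the \emph{same} grouping of the variables of $\widetilde S$ and the \emph{same} sequence $\sigma$ that exhibits $\widetilde I$ as a polarization of $I$. Write the minimal generators of $I$ as $m_j=\prod_{i\in\sigma_j}x_i$ and those of $\widetilde I$ as $\widetilde m_j=\prod_{i\in\sigma_j}x_i^{(c_j(i))}$, so that $\widetilde m_j$ depolarizes to $m_j$. The first step is to pass to the primary-decomposition form of the Alexander dual: for a squarefree ideal the dual is the intersection of the monomial primes cut out by the supports of its generators, so
$$D(I)=\bigcap_j Q_j,\qquad Q_j=\left(x_i : i\in\sigma_j\right),$$
and
$$D(\widetilde I)=\bigcap_j P_j,\qquad P_j=\left(x_i^{(c_j(i))} : i\in\sigma_j\right).$$
The depolarization map $\phi\colon\widetilde S\to\widetilde S/(\sigma)=S$, $x_i^{(k)}\mapsto x_i$, visibly carries $P_j$ onto $Q_j$. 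Since both $\widetilde I$ and $D(\widetilde I)$ are squarefree ideals in the same ring, it remains to prove (a) that $\sigma$ is a regular sequence on $\widetilde S/D(\widetilde I)$ and (b) that $\mathrm{depol}(D(\widetilde I))=D(I)$.

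For (a) I would use that $I$ is squarefree: each $\widetilde m_j$ depolarizes to the squarefree monomial $m_j$, hence uses at most one copy $x_i^{(k)}$ of each variable, so each minimal prime $P_j$ of $D(\widetilde I)$ contains at most one copy of every colour $i$. Consequently no difference $x_i^{(1)}-x_i^{(k)}$ occurring in $\sigma$ lies in any $P_j$, and $\sigma$ avoids every associated prime of $D(\widetilde I)$. To promote this to regularity of the whole sequence I would isolate, as a preliminary lemma, the standard associated-prime criterion for polarizing sequences (see \cite{Ya}): the colour-difference sequence is $\widetilde S/\widetilde J$-regular exactly when no minimal prime of the squarefree ideal $\widetilde J$ repeats a colour. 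Applied to $\widetilde J=D(\widetilde I)$ this gives (a) at once; applied to $\widetilde J=\widetilde I$ it recovers the hypothesis, so the criterion does double duty.

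For (b) the inclusion $\mathrm{depol}(D(\widetilde I))\subseteq D(I)$ is immediate: a minimal generator of $D(\widetilde I)$ is the monomial $\prod_{v\in\tau}x_v$ of a minimal cover $\tau$ of the supports $\{\mathrm{supp}\,\widetilde m_j\}$, and $\phi$ sends it to the squarefree monomial supported on $\bar\tau=\{i : \tau\text{ meets colour }i\}$, which meets every $\sigma_j$ and hence lies in $D(I)$. Using the regularity from (a), specialization along $\sigma$ preserves $\beta_0$, so these images are pairwise distinct and form a minimal generating set: depolarization neither collapses nor makes redundant any generator of $D(\widetilde I)$. Thus $\mathrm{depol}(D(\widetilde I))$ is minimally generated by the distinct squarefree monomials indexed by $\{\bar\tau\}$, each a multiple of a minimal generator of $D(I)$.

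What remains, and what I expect to be the main obstacle, is to show that $\tau\mapsto\bar\tau$ is a bijection from the minimal covers of $\{\mathrm{supp}\,\widetilde m_j\}$ onto the minimal covers of $\{\sigma_j\}$; equivalently, that each $\bar\tau$ is already a \emph{minimal} cover of $\{\sigma_j\}$ and that every minimal cover downstairs arises. Once this is in hand, the distinct monomials indexed by $\{\bar\tau\}$ are exactly the minimal generators of $D(I)$, giving $\mathrm{depol}(D(\widetilde I))=D(I)$ and completing the verification. The difficulty is that this bijection cannot come from squarefreeness of the individual primes $P_j$ alone: a partial sum $P_i+P_j$ may well repeat a colour, so that $\sigma$ is \emph{not} regular modulo it and the naive ``intersection commutes with specialization'' argument breaks down; the correspondence is genuinely global. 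It is forced precisely by the polarization hypothesis on $\widetilde I$, i.e. by colour-distinctness of \emph{all} minimal covers of $\{\mathrm{supp}\,\widetilde m_j\}$ -- the same condition that, via the criterion of (a), made $\sigma$ regular in the first place. I would therefore isolate that criterion as a lemma and use it to control both the regularity and the minimality-preservation of covers.
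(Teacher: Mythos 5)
Your proposal is a well-oriented plan, but it is not yet a proof: the two places where you defer to a lemma or announce an obstacle are exactly where the mathematical content of the theorem lives, and neither is supplied. First, the ``standard associated-prime criterion'' you invoke for (a) is not something you can cite and move on from. For a \emph{single} colour-difference the argument is fine (and, nicely, does not even need the hypothesis that $\widetilde I$ is a polarization: the associated primes of the radical ideal $D(\widetilde I)$ are exactly the $P_j$, and these are colour-distinct because $I$ is squarefree). But regularity of the \emph{whole} sequence does not follow formally from each element avoiding the minimal primes of $D(\widetilde I)$: after quotienting by the first difference the ideal is no longer radical, embedded primes can appear, and a minimal prime of the specialized ideal need not lift to a colour-distinct minimal prime upstairs. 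The equivalence ``no minimal prime repeats a colour $\Leftrightarrow$ the full difference sequence is regular'' is a theorem requiring proof, not a definition-chase. The paper sidesteps this by reducing to one variable-split at a time (legitimate here, since for squarefree $I$ every intermediate depolarization is again squarefree), so that only the one-element case of your criterion is ever needed -- but that induction then requires knowing at each stage that the intermediate dual is the dual of the intermediate ideal, i.e.\ it requires your part (b).

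Second, and more seriously, part (b) -- that $\tau\mapsto\bar\tau$ is a bijection from minimal covers of $\{\mathrm{supp}\,\widetilde m_j\}$ onto minimal covers of $\{\sigma_j\}$ -- is precisely the theorem, and you leave it open after correctly diagnosing that the polarization hypothesis must enter there. The missing ingredient is the paper's key construction. Working with a single split $x_1\mapsto x_1,x_{1'}$: if some cover $\tau$ of the polarized supports depolarizes to a non-minimal cover of the $\sigma_j$ (equivalently, if some minimal cover downstairs fails to lift inside $\tau$), one extracts two generators $m_{\sigma'_a}$ and $m_{\sigma'_b}$ of $\widetilde I$, one containing $x_1$ and one containing $x_{1'}$, each of which meets the relevant cover only in that copy of the split variable; then the monomial
$$n=\frac{\mathrm{lcm}\left(m_{\sigma'_a},m_{\sigma'_b}\right)}{x_1x_{1'}}$$
satisfies $x_1n\in\widetilde I$, $x_{1'}n\in\widetilde I$, but $n\notin\widetilde I$ (any generator dividing $n$ would contradict the covering data), so $x_1-x_{1'}$ is a zero-divisor on $\widetilde S/\widetilde I$, contradicting the hypothesis. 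No analogue of this lcm construction appears in your proposal, and without it the cover correspondence -- which genuinely fails for non-polarizations, e.g.\ $\widetilde I=(x_1a,\,x_2b)$ over $I=(xa,\,xb)$, where $D(\widetilde I)$ depolarizes to $(x^2,xa,xb,ab)\neq D(I)$ -- cannot be established. I would recommend adopting the paper's single-split reduction, keeping your clean associated-prime argument for the non-zero-divisor property of $D(\widetilde I)$, and then proving the cover bijection via the displayed construction.
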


\begin{proof}
First of all, we may without loss of generality assume that the polarization $\widetilde{I}$ is an ideal in $\widetilde{S}=k[x_1, x_{1'}, x_2, x_3, \dots, x_n]$, such that the element $x_1-x_{1'}$ is a non-zero divisor in $\widetilde{S}/\widetilde{I}$, and such that $\widetilde{I}\otimes \widetilde{S}/(x_1-x_{1'}) \cong I$. This is because every polarization is defined to be an iteration of such polarizations, and if the result is true for this case, it must be true in general.

Let $m_{\sigma_1}, m_{\sigma_2},\dots, m_{\sigma_r}$ be a minimal generator set of $I$, where $\sigma_j\subseteq [n]$ and $m_{\sigma_j} = \prod\limits_{i\in \sigma_j} x_i$. We may assume that the sets $\sigma_j$ are chosen such that $\widetilde{I}$ is generated by the monomials $m_{\sigma'_1},\dots, m_{\sigma'_r}$, where $\sigma'_i = \sigma_i$ for $i=1,2,\dots, s$, and $\sigma'_i = \left(\{1'\}\cup \sigma_i\right)\setminus\{1\}$ for $i=s+1,\dots, r$. We can also assume that $1\in \sigma'_i$ for at least one $i$ between $1$ and $s$, since otherwise the polarization would only be a change of variable name.

We now want to show that $D(\widetilde{I})$ in $\widetilde{S}$ is a polarization of $D(I)$. So we need to check that $x_1-x_{1'}$ is a non-zero divisor in $\widetilde{S}/D(\widetilde{I})$, and that $D(\widetilde{I})\otimes \widetilde{S}/(x_1-x_{1'})\cong D(I)$.

First we show that $x_1-x_{1'}$ is a non-zero divisor. So assume otherwise. That is, that there is a square-free monomial $m\in \widetilde{S}$ such that $(x_1-x_{1'})\overline{m} = 0$ in $\widetilde{S}/D(\widetilde{I})$, where $\overline{m}\neq 0$ in $\widetilde{S}/D(\widetilde{I})$. But this means that $x_1m\in D(\widetilde{I})$ and that $x_{1'}m\in D(\widetilde{I})$, while $m\not\in D(\widetilde{I})$. We may assume that $m$ is a square-free monomial and we have that $x_1$ and $x_{1'}$ do not divide $m$. This means that $m = m_\tau$ for some set $\tau\subseteq [n]$ such that $\tau\cap \sigma'_i=\emptyset$ for at least one $i$, while $\left(\{1\}\cup \tau\right) \cap \sigma'_i \neq \emptyset$ and $\left(\{1'\}\cup\tau\right)\cap\sigma'_i \neq \emptyset$ for all $i$. By the construction of $\sigma'_i$, we know that we can not have both $1$ and $1'$ in the same set $\sigma'_i$, so we must have that $\tau\cap\sigma'_a = \emptyset$ for some $a$ between $1$ and $s$ and $\tau\cap\sigma'_b=\emptyset$ for some $b$ between $s+1$ and $r$. But now we can show that $x_1-x_{1'}$ must also be a zero-divisor in $\widetilde{S}/\widetilde{I}$ contradicting the fact that $\widetilde{I}$ is a polarization. To see this, we let $n = \frac{\mathrm{lcm}\left(m_{\sigma'_a}, m_{\sigma'_b}\right)}{x_1x_{1'}}$. We verify that $n\not\in \widetilde{I}$, since if $n\in\widetilde{I}$, then there would have to be a generator $m_{\sigma'_c}=m_{\sigma_c}$ in $\widetilde{I}$ that divides $n$. However, this is not possible, because then we would have that $\left(\{1\}\cup \tau\right)\cap \sigma_c' = \emptyset$ since $\tau\cap \sigma_a = \tau\cap\sigma_b = \emptyset$ and since $1\not\in \sigma_c'$. But this contradicts the fact that $x_1m\in D(\widetilde{I})$ which we assumed earlier. So $n\not\in \widetilde{I}$, while we clearly have that $x_1n\in \widetilde{I}$ and $x_{1'}n\in \widetilde{(I)}$ because of the definition of $n$. This means that $(x_1-x_{1'})\overline{n}=0$ in $\widetilde{S}/\widetilde{I}$ while $\overline{n}\neq 0$. So $x_1-x_{1'}$ is a zero-divisor which is a contradiction. Hence $x_1-x_{1'}$ is a non-zero divisor in $\widetilde{S}/D(\widetilde{I})$ as we wished to prove.

Next, we need to show that in fact $D(\widetilde{I})\otimes \widetilde{S}/(x_1-x_{1'})\cong D(I)$. To do this, we assume that $D(I)$ has a minimal generator set consisting of the monomials $n_{\tau_1},n_{\tau_2},\dots, n_{\tau_p}$. We want to show that $D(\widetilde{I})$ is generated by the monomials $n_{\tau'_1},n_{\tau'_2},\dots, n_{\tau'_p}$ where $\tau'_j = \tau_j$ if $\tau_j\cap \sigma_i \neq \emptyset$ for all $i$, and $\tau'_j = \left(\{1'\}\cup \tau_j\right)\setminus \{1\}$ otherwise.

So suppose that $n_\tau \in D(\widetilde{I})$. We want to show that $n_\tau\in (n_{\tau'_1},n_{\tau'_2},\dots, n_{\tau'_p})$. Since $n_\tau\in D(\widetilde{I})$, we must have that $\tau\cap \sigma'_i \neq \emptyset$ for all $i$. If $\tau\cap \sigma_i\neq \emptyset$ for all $i$, then $n_\tau\in D(I)$, so there is a subset $\tau_j\subseteq \tau$ such that $\tau_j\cap \sigma_i\neq \emptyset$ for all $i$. In this case $\tau'_j = \tau_j$, so we have that $\tau'_j\subseteq \tau$ as well, and $n_\tau\in (n_{\tau'_1},\dots,n_{\tau'_p})$. Next, we assume that $\tau\cap \sigma_i = \emptyset$ for some $i$. This has to mean that $1\not\in \tau$ and $1'\in \tau$. If we define $\rho = \left(\{1\}\cup \tau\right) \setminus\{1'\}$, we must then have that $\rho\cap \sigma_i \neq \emptyset$ for all $i$. So this means that there is a subset $\tau_j\subseteq \rho$ such that $\tau_j\cap \sigma_i \neq \emptyset$ for all $i$. Since we have that $\tau_j\cap \sigma'_i = \emptyset$ for some $i$, we have that $\tau'_j=\left(\{1'\}\cup\tau_j\right)\setminus\{1\}$, and that $\tau'_j\subseteq \tau$. There are now two possibilities. Either $\tau'_j\cap \sigma'_i \neq \emptyset$ for all $i$. In this case we have that $n_\tau\in (n_{\tau'_1},\dots, n_{\tau'_p})$. The other possibility is that $\tau'_j\cap \sigma'_{i'} = \emptyset$ for some $i'$. Since we now both have that $\tau_j\cap \sigma'_i = \emptyset$ for some $i$ and $\tau'_j\cap \sigma'_{i'} = \emptyset$ for some $i'$, we must have that $\tau_j\cap\sigma_a = \tau_j\cap\sigma_{b} = \{1\}$ for some $a$ between $1$ and $s$ and some $b$ between $s+1$ and $r$. But we can exclude this case exactly as we did in the first step above. Because if so, we can show that $x_1-x_{1'}$ is a zero-divisor in $\widetilde{S}/\widetilde{I}$, contradicting the fact that $\widetilde{I}$ is a polarization. To see this, we let $n = \frac{\mathrm{lcm}\left(m_{\sigma'_a},m_{\sigma'_b}\right)}{x_1x_{1'}}$. Again, we see that $n\not\in \widetilde{I}$ since if so, then we must have a generator $m_{\sigma'_c}=m_{\sigma_c}$ dividing $n$. But this is not possible, since then $\tau_j\cap \sigma_c = \emptyset$. It is also obvious that $x_1n\in \widetilde{I}$ and $x_{1'}n\in \widetilde{I}$. So therefore, we have shown that $D(\widetilde{I})$ is generated by the monomials $n_{\tau'_1},\dots,n_{\tau'_p}$, which completes the proof.

\end{proof}

Suppose that $I_d=(x_1,\dots,x_n)^d_\mathrm{sq.fr.}$ and that $\widetilde{I_d}$ is a polarization of $I_d$. We want to describe $\widetilde{I_d}$ in terms of partitions, such that its Alexander dual $D(\widetilde{I_d})$, which is a polarization of $D(I_d)=I_{n-d+1}$, is described by a dual partition. Let
$$\Gamma_d = \{\sigma \in [n]\,|\, |\sigma| = d\}.$$
Suppose that $\widetilde{I_d}$ is generated by the monomials
$$m_\sigma = \prod_{i\in \sigma} x_i^{(a_{i,\sigma})}, \text{ for all }\sigma\in \Gamma_d.$$
Without loss of generality we may assume that the set of all $a_{i,\sigma}$ for a given $i$ is just the set $\{1,\dots,r_i\}$. Define
$$\Sigma_i^d=\{\sigma\in \Gamma_{d-1}\,|\, i\not\in \sigma \}.$$
We will now have a one-to-one correspondence between possible polarizations $\widetilde{I_d}$ of $I$, and partitionings of the sets $\Sigma_i^d$. We say possible polarizations meaning that $\widetilde{I_d}$ modulo the sequence of the differences of variables are isomorphic to $I$, but without the claim that this sequence is a regular sequence. The one-to-one correspondence is as follows. From $\widetilde{I_d}$ as above, we partition
$$\Sigma_i^d=P_{i,1}\cup P_{i,2}\cup \cdots \cup P_{i,r_i}\text{ where }P_{i,j} = \{\sigma\in \Sigma_i^d\,|\, a_{i,\sigma\cup\{i\}}=j\}.$$
And in the other direction, suppose that $\Sigma_i^d = P_{i,1}\cup P_{i,2}\cup \cdots \cup P_{i,r_i}$. Then we let $\widetilde{I_d}$ be the ideal generated by the monomials
$$m_\sigma = \prod_{i\in\sigma} x_i^{(j)}, \text{ where }\sigma\setminus\{i\}\in P_{i,j}.$$

We will illustrate this correspondence with the following example:
\\
\begin{Example}\label{partisjoneringseksempel}
Let for instance $d = 3$ and $n=4$. So we have the ideal $I_3 = (x_1x_2x_3,x_1x_2x_4,x_1x_3x_4,x_2x_3x_4)$. We now have the sets
$$
\begin{array}{ll}
\Sigma_1^3 = & \left\{\{2,3\}, \{2,4\}, \{3,4\}\right\}, \\
\Sigma_2^3 = & \left\{\{1,3\}, \{1,4\}, \{3,4\}\right\}, \\
\Sigma_3^3 = & \left\{\{1,2\}, \{1,4\}, \{2,4\}\right\} \text{ and}  \\
\Sigma_4^3 = & \left\{\{1,2\}, \{1,3\}, \{2,3\}\right\}.
\end{array}
$$

Suppose that $P_{1,1} = \left\{\{2,3\}, \{2,4\}\right\}$, $P_{1,2} = \left\{\{3,4\}\right\}$, $P_{2,1} = \left\{\{1,3\}\right\}, P_{2,2} = \left\{\{1,4\}, \{3,4\}\right\}$, $P_{3,1}=\Sigma_3^3$ and $P_{4,1} = \Sigma_4^3$. We then have the following partitioning
\begin{align*}
\Sigma_1^3 = & P_{1,1}\cup P_{1,2} \\
\Sigma_2^3 = & P_{2,1}\cup P_{2,2} \\
\Sigma_3^3 = & P_{3,1} \\
\Sigma_4^3 = & P_{4,1}.
\end{align*}
This partitioning corresponds to the possible polarization ideal
$$\widetilde{I_d} = \left(x_1^{(1)}x_2^{(1)}x_3^{(1)}, x_1^{(1)}x_2^{(2)}x_4^{(1)}, x_1^{(2)}x_3^{(1)}x_4^{(1)}, x_2^{(2)}x_3^{(1)}x_4^{(1)}\right).$$
To see what the monomial $x_1x_2x_4$ should be in $\widetilde{I_d}$, we see that $x_1$ should go to $x_1^{(1)}$ since $\{2,4\}\in P_{1,1}$, we see that $x_2$ should go to $x_2^{(2)}$ since $\{1,4\}\in P_{2,2}$ and finally that $x_4$ should go to $x_4^{(1)}$ since $\{1,3\}\in P_{4,1}$.

The other way around is easy. For instance, if we have a monomial $x_1^{(2)}x_2^{(3)}x_4^{(1)}$, it means that $\{2,4\}$ should be in a set $P_{1,2}$, that $\{1,4\}$ should be in a set $P_{2,3}$ and that $\{1,2\}$ should be in a set $P_{4,1}$. If we write this out for all generators of $\widetilde{I_d}$ we end up with a partitioning of $\Sigma_1^3,\Sigma_2^3,\Sigma_3^3$ and $\Sigma_4^3$.
\end{Example}

\begin{Def}
Let $\Sigma_i^d=P_{i,1}\cup\cdots\cup P_{i,r_i}$ be a partitioning as described above. We define
$$P_{i,j}^c = \left\{\sigma\,\middle|\, \sigma\subseteq [n],\, |\sigma|=n-d \text{ and } [n]\setminus \left(\sigma\cup\{i\}\right)\in P_{i,j}\right\}.$$
In other words, $P_{i,j}^c$ is the set of the complements of the elements in $P_{i,j}$, where the complements are taken in $[n]\setminus \{i\}$. The partitioning
$$\Sigma_i^{n-d+1} = P_{i,1}^c\cup\cdots\cup P_{i,r_i}^c$$
is called the dual partitioning of $\Sigma_i^d=P_{i,1}\cup\cdots\cup P_{i,r_i}$.
\end{Def}

\begin{Example}
If $\Sigma_1^3,\dots,\Sigma_4^4$ are partitioned as in Example \ref{partisjoneringseksempel} above, then we get the dual partitioning
\begin{align*}
\Sigma_1^2 = & P_{1,1}^c\cup P_{1,2}^c \\
\Sigma_2^2 = & P_{2,1}^c\cup P_{2,2}^c \\
\Sigma_3^2 = & P_{3,1}^c \\
\Sigma_4^2 = & P_{4,1}^c.
\end{align*}
where $P_{1,1}^c = \left\{\{4\}, \{3\}\right\}$, $P_{1,2}^c = \left\{\{2\}\right\}$, $P_{2,1}^c = \left\{\{4\}\right\}$, $P_{2,2}^c = \left\{\{3\}, \{1\}\right\}$, $P_{3,1} = \Sigma_3^2$ and $P_{4,1}^c = \Sigma_4^2$. 
\end{Example}

We can now state the result that shows how polarizations behaves under Alexander duality using this partitioning description.
\begin{Theorem}\label{polarisering}
Suppose that $\widetilde{I_d}$ is a polarization of $I_d$, which corresponds to the partitioning $\Sigma_i^d=P_{i,1}\cup\cdots\cup P_{i,r_i}$. Then the Alexander dual $D(\widetilde{I_d})$ is the polarization of $D(I_d)$ correspondig to the dual partitioning $\Sigma_i^{n-d+1} = P_{i,1}^c\cup \cdots \cup P_{i,r_i}^c$.
\end{Theorem}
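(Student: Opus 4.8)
The plan is to lean on the Alexander-dual theorem already proved above rather than re-examine the regular sequence directly. By that theorem, since $\widetilde{I_d}$ is a polarization of $I_d$, its Alexander dual $D(\widetilde{I_d})$ is automatically a polarization of $D(I_d)=I_{n-d+1}$. Hence $D(\widetilde{I_d})$ corresponds, under the partition dictionary set up above, to \emph{some} partitioning $\Sigma_i^{n-d+1}=Q_{i,1}\cup\cdots\cup Q_{i,r_i}$, and the whole content of the theorem reduces to the identity $Q_{i,j}=P_{i,j}^c$. Concretely, I would write the minimal generators of $D(\widetilde{I_d})$ as $g_\tau=\prod_{i\in\tau}x_i^{(c_{i,\tau})}$ for $\tau\in\Gamma_{n-d+1}$; this is legitimate because a polarization of the equigenerated ideal $I_{n-d+1}$ has exactly one squarefree minimal generator depolarizing to each $\prod_{i\in\tau}x_i$, using one copy of each $x_i$ with $i\in\tau$. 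Then $Q_{i,j}=\{\rho\in\Sigma_i^{n-d+1}\mid c_{i,\rho\cup\{i\}}=j\}$, and the task becomes to compute the exponents $c_{i,\tau}$.

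The key step is a purely combinatorial observation. Fix $\tau\in\Gamma_{n-d+1}$ and $i\in\tau$, and set $\sigma_{\tau,i}=\left([n]\setminus\tau\right)\cup\{i\}$; this is a $d$-subset of $[n]$ containing $i$, so $m_{\sigma_{\tau,i}}$ is one of the generators of $\widetilde{I_d}$, with support $\{(k,a_{k,\sigma_{\tau,i}})\mid k\in\sigma_{\tau,i}\}$. Since $g_\tau\in D(\widetilde{I_d})$, the defining property of the Alexander dual forces the support of $g_\tau$ to meet the support of \emph{every} generator of $\widetilde{I_d}$, in particular that of $m_{\sigma_{\tau,i}}$. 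The crucial point is that $\tau\cap\sigma_{\tau,i}=\{i\}$, because $\tau$ is disjoint from $[n]\setminus\tau$ and meets $\{i\}$ only in $i$. Therefore the two supports can meet only in a copy of $x_i$, which forces $c_{i,\tau}=a_{i,\sigma_{\tau,i}}$. As $i$ ranges over $\tau$ this determines every exponent, so $g_\tau=\prod_{i\in\tau}x_i^{(a_{i,\sigma_{\tau,i}})}$.

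It then remains to feed this back through the definition of the dual partitioning. Writing $\tau=\rho\cup\{i\}$ with $\rho\in\Sigma_i^{n-d+1}$, one has $\sigma_{\tau,i}=[n]\setminus\rho=\left([n]\setminus(\rho\cup\{i\})\right)\cup\{i\}$, so that $c_{i,\rho\cup\{i\}}=a_{i,\sigma_{\tau,i}}=j$ is equivalent, by the very definition of $P_{i,j}$, to $[n]\setminus(\rho\cup\{i\})\in P_{i,j}$, i.e. to $\rho\in P_{i,j}^c$. This is exactly the statement $Q_{i,j}=P_{i,j}^c$, which finishes the proof.

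I expect the only genuine subtleties to be bookkeeping rather than analysis. All of the hard input, namely that the difference sequence is regular, is already packaged inside the cited Alexander-dual theorem, and it is precisely this that guarantees that $g_\tau$ exists with the stated shape and is an honest transversal; indeed the partition in Example \ref{partisjoneringseksempel}, which is only a \emph{possible} polarization, is exactly a case where no such transversal generator exists, so the hypothesis that $\widetilde{I_d}$ is a polarization cannot be dropped. The main place to be careful is the index and complement juggling in the last paragraph (complements are taken inside $[n]\setminus\{i\}$, and one should confirm the size count $|\sigma_{\tau,i}|=d$), together with checking that distinct $\tau$ give distinct generators so that $Q$ is well defined. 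A more pedestrian alternative would be to reprove the transversal property by a direct zero-divisor argument in the style of the earlier theorem's proof, but the route above seems both shorter and more transparent.
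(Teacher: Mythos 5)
Your proof is correct, but it is organized quite differently from the paper's. The paper defines $J$ to be the ideal of the dual partitioning and proves the inclusion $J\subseteq D(\widetilde{I_d})$ directly: it shows that $\mathrm{Supp}(n_\tau)\cap\mathrm{Supp}(m_\sigma)\neq\emptyset$ for every $\tau\in\Gamma_{n-d+1}$ and $\sigma\in\Gamma_d$ by a descending induction on $|\sigma\cap\tau|$, where each step either exhibits an explicit annihilator $f=\mathrm{lcm}(m_{\sigma_1},m_\sigma)/\bigl(x_1^{(a_1)}x_1^{(s)}\bigr)$ of $x_1^{(a_1)}-x_1^{(s)}$ (contradicting regularity) or replaces $\sigma$ by some $\sigma'$ with smaller intersection, until the base case $|\sigma\cap\tau|=1$ is reached --- which is exactly your computation $c_{i,\tau}=a_{i,\sigma_{\tau,i}}$ read backwards; the reverse inclusion $D(\widetilde{I_d})\subseteq J$ is then obtained from the earlier duality theorem by counting generators and degrees. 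You instead invoke that duality theorem at the outset to conclude not merely the number of generators of $D(\widetilde{I_d})$ but their transversal shape $g_\tau=\prod_{i\in\tau}x_i^{(c_{i,\tau})}$, one for each $\tau$, and then pin down every exponent with the single observation $\tau\cap\sigma_{\tau,i}=\{i\}$. This eliminates the induction and the zero-divisor construction entirely; the price is that you lean harder on the duality theorem (for the structure of the generators, not just their count), but that structural claim is precisely the paper's own standing convention for encoding a polarization of an equigenerated square-free ideal as a partitioning, and your Betti-number justification of it is sound. Both arguments are valid; yours is shorter and, as you note, localizes all use of the regular-sequence hypothesis inside the cited theorem.
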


\begin{proof}
Let $J$ be the ideal corresponding to the dual partitioning. That is, $J$ is generated by the monomials $n_\tau = \prod\limits_{i\in\tau} x_i^{(j)}, \text{ where }\tau\setminus\{i\}\in P_{i,j}^c$.

First off all, we want to show that $J\subseteq D(\widetilde{I_d})$. So we need to show that if $\tau\subseteq [n]$ is such that $\tau\setminus\{i\}\in P_{i,j}^c$, then we need to show that $n_\tau\in D(\widetilde{I_d})$. That is the same as showing that $\mathrm{Supp}(n_\tau)$ is not a face in $D(\widetilde{\Delta})$, i.e. the Alexander dual of the simplicial complex corresponding to $\widetilde{I_d}$. But the faces in $D(\widetilde{\Delta})$ are just the complements of the faces $\mathrm{Supp}(m_\sigma)$ in $\widetilde{\Delta}$. Thus, it is enough to show that $\mathrm{Supp}(n_\tau)\cap\mathrm{Supp}(m_\sigma)\neq \emptyset$ for all $\tau\in \Gamma_{n-d+1}$ and $\sigma\in \Gamma_d$.

Suppose now that there is a $\tau\in \Gamma_{n-d+1}$ and $\sigma\in \Gamma_{d}$ such that $\mathrm{Supp}(m_\sigma)\cap\mathrm{Supp}(n_\tau) = \emptyset$. Since $|\tau| = n-d+1$ and $|\sigma|=d$, the intersection $\sigma\cap\tau$ must be non-empty. Suppose first that $\sigma\cap\tau=\{i\}$. But then $\tau = (\sigma\setminus\{i\})^c$, and if $\sigma\setminus\{i\}\subseteq P_{i,j}$, then $\tau\setminus\{i\}\subseteq P_{i,j}^c$. But this means that $i^{(j)}:=\mathrm{Supp}(x_i^{(j)})$ is in both $\mathrm{Supp}(m_\sigma)$ and $\mathrm{Supp}(n_\tau)$ which is a contradiction. So $|\sigma\cap\tau|=k>1$. Without loss of generality, we may assume that $\sigma\cap\tau = \{1,\dots, k\}$. It is clear that there are sets $\sigma_1,\dots,\sigma_k$ such that $\tau=(\sigma_i\setminus\{i\})^c$ for all $i\in \{1,\dots, k\}$. Let $i^{(a_i)}$ be the polarized vertex of $i$ in $\mathrm{Supp}(m_{\sigma_i})$, and $i^{(s)}$ the polarized vertex of $i$ in $\mathrm{Supp}(m_\sigma)$. If $a_i=s$ for some $i$, we will get a contradiction as in the case above. So we may assume that $i^{(a_i)}\neq i^{(s)}$ for all $i\in \{1,\dots,k\}$. We will now show that either there exists a $\sigma'$ such that $\mathrm{Supp}(n_\tau)\cap\mathrm{Supp}(m_{\sigma'}) = \emptyset$ and $|\sigma'\cap \tau|<k$, or that the elements $x_i^{(a_i)}-x_i^{(s)}$ are zero divisors in $\widetilde{S}/\widetilde{I_d}$. In any case, iterating the argument if neccessary, we will either end up with a zero divisor for $\widetilde{S}/\widetilde{I_d}$, which contradicts the fact that $\widetilde{I_d}$ is a polarization of $I_d$, or we end up with a $\sigma''$ such that $|\sigma''\cap\tau|=1$ giving the contradiction above. Let
$$f=\frac{\mathrm{lcm}(m_{\sigma_1}, m_\sigma)}{x_1^{(a_1)}x_1^{(s)}}.$$
Then it is clear that $(x_1^{(a_1)}-x_1^{(s)})\overline{f}=0$ so either $x_1^{(a_1)}-x_1^{(s)}$ is a zero divisor in $\widetilde{S}/\widetilde{I_d}$, or $\overline{f}=0$ in $\widetilde{S}/\widetilde{I_d}$. But if $\overline{f}=0$, it means that there is a $\sigma'$, such that $m_{\sigma'}$ divides $f$. But by the construction of $f$, we must have that $\sigma'\cap \tau=\alpha$, with $\alpha\subseteq \{2,\dots,k\}$. But now $\mathrm{Supp}(m_{\sigma'})\cap \mathrm{Supp}(n_\tau)=\emptyset$, since we have assumed that $\mathrm{Supp}(m_{\sigma})\cap \mathrm{Supp}(n_\tau)=\emptyset$, and since $i^{(s)}\in \mathrm{Supp}(m_{\sigma'})$ is also $i^{(s)}$ in $\mathrm{Supp}(m_\sigma)$, for $i\in \alpha$. This is because of the definition of $f$. Since $|\alpha|\le k-1$, we are done.

Next, we will have to show that $D(\widetilde{I_d})\subseteq J$. But this is clear since we know from Theorem \ref{polarisering} that $D(\widetilde{I_d})$ is a polarization of $D(I_d)$. This means that $D(\widetilde{I_d})$ and $D(I_d)$ should have the same number of generators, and they are all of degree $n-d+1$. Since $J$ are already generated by this number of generators of degree $n-d+1$, it is clear that $D(\widetilde{I_d})\subseteq J$, and it follows that $J=D(\widetilde{I_d})$. 
\end{proof}

It is possible to give a description of when a partitioning of the sets $\Sigma_i^d$ correspond to a polarization of the ideal $I_d$. It is straight forward, but in general it is difficult to use.

\begin{Prop}\label{partisjonering}
A partitioning $P$ corresponds to a polarization of $I_d$ if and only if the following holds: For any $\sigma\in P_{i,j}$ and $\tau\in P_{i,j'}$, $j\neq j'$, then there exists a $\beta\subseteq \sigma\cup\tau$, such that $|\beta|=d$ and for all $t\in \beta$ then if $\beta\setminus\{t\}\in P_{t,s}$, then $\sigma\cup\{i\}\setminus\{t\}\in P_{t,s}$ or $\tau\cup\{i\}\setminus\{t\}\in P_{t,s}$, or both.
\end{Prop}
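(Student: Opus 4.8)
The plan is to read the stated condition as the exact combinatorial shadow of the non-zero-divisor criterion for the sequence $\sigma$, mimicking the zero-divisor arguments used in the Alexander duality theorem above. Since the correspondence between partitionings and candidate ideals already guarantees that the ideal $\widetilde{I_d}$ built from a partitioning $P$ depolarizes to $I_d$, the only thing at stake in ``$P$ corresponds to a polarization'' is that $\sigma$ is a regular $\widetilde{S}/\widetilde{I_d}$-sequence. As in that earlier proof I would use that a polarization is an iteration of single splits, so I first reduce regularity of $\sigma$ to the statement that for every split variable $i$ and every pair of colours $j\neq j'$ the linear form $x_i^{(j)}-x_i^{(j')}$ is a non-zero-divisor on $\widetilde{S}/\widetilde{I_d}$. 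For a difference of two variables this is in turn equivalent to: there is no squarefree monomial $f$, involving no variable $x_i^{(\cdot)}$, with $x_i^{(j)}f,\,x_i^{(j')}f\in\widetilde{I_d}$ but $f\notin\widetilde{I_d}$.

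The core of the argument is the translation of this last condition, for a fixed triple $(i,j,j')$, into the statement of the proposition. Exactly as in the earlier proof I would test it on the explicit candidate witness $f=\mathrm{lcm}(m_{\sigma\cup\{i\}},m_{\tau\cup\{i\}})/(x_i^{(j)}x_i^{(j')})$ attached to a pair of generators using colour $j$ and $j'$ at $i$, i.e.\ to $\sigma\in P_{i,j}$ and $\tau\in P_{i,j'}$. One checks directly that $x_i^{(j)}f$ is a multiple of $m_{\sigma\cup\{i\}}$ and $x_i^{(j')}f$ a multiple of $m_{\tau\cup\{i\}}$, so both lie in $\widetilde{I_d}$; hence $x_i^{(j)}-x_i^{(j')}$ is a non-zero-divisor precisely when every such $f$ already lies in $\widetilde{I_d}$. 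For the converse containment I would argue that any witness can be normalised: taking $f$ squarefree and free of $i$-variables, the generators dividing $x_i^{(j)}f$ and $x_i^{(j')}f$ must themselves use colour $j$, resp.\ $j'$, at $i$, and their quotients by the $i$-variable divide $f$; their least common multiple is exactly the $f$ above, so it suffices to treat these distinguished witnesses.

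It then remains to unwind ``$f\in\widetilde{I_d}$'' into the combinatorial clause. The monomial $f$ depolarizes to the support $\sigma\cup\tau$ and carries no $i$-variable, so a generator $m_\beta$ dividing it must satisfy $i\notin\beta$, $|\beta|=d$ and $\beta\subseteq\sigma\cup\tau$; that is, $\beta$ is one of the sets allowed in the proposition. Writing $\beta\setminus\{t\}\in P_{t,s}$, so that $m_\beta$ uses $x_t^{(s)}$, divisibility $m_\beta\mid f$ holds at the variable $t$ exactly when the colour $s$ agrees with the colour of $t$ in $m_{\sigma\cup\{i\}}$ (if $t\in\sigma$) or in $m_{\tau\cup\{i\}}$ (if $t\in\tau$), since $f$ carries precisely those colours; and this is literally the requirement ``$\sigma\cup\{i\}\setminus\{t\}\in P_{t,s}$ or $\tau\cup\{i\}\setminus\{t\}\in P_{t,s}$''. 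Thus the existence of a generator dividing $f$ is the same as the existence of the set $\beta$ in the statement, and quantifying over all $i$, all $j\neq j'$, and all $\sigma\in P_{i,j}$, $\tau\in P_{i,j'}$ yields both implications at once.

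The step I expect to be the main obstacle is the first reduction: that regularity of the full sequence $\sigma$ is governed by each individual difference $x_i^{(j)}-x_i^{(j')}$ being a non-zero-divisor, tested against the generators of $\widetilde{I_d}$, rather than against the successive partial depolarizations that arise while building up $\sigma$. I would approach it through the iterated-single-split description of polarization used in the Alexander duality theorem above, tracking how the $\mathrm{lcm}$-witnesses behave under a single depolarization step; the content is to show that, in passing between $\widetilde{I_d}$ and its partial depolarizations, no zero-divisor is created beyond what the stated condition already controls. By contrast, the combinatorial translation carried out in the middle paragraphs is essentially bookkeeping once this reduction is in place.
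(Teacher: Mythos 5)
Your argument is essentially the paper's own proof: reduce to each difference $x_i^{(j)}-x_i^{(j')}$ being a non-zero-divisor on $\widetilde{S}/\widetilde{I_d}$, observe that any annihilator must be divisible by $\mathrm{lcm}\bigl(m_{\sigma\cup\{i\}},m_{\tau\cup\{i\}}\bigr)/\bigl(x_i^{(j)}x_i^{(j')}\bigr)$ for some $\sigma\in P_{i,j}$ and $\tau\in P_{i,j'}$, and translate divisibility of this monomial by a generator $m_\beta$ into the stated condition on $\beta$. The reduction from regularity of the whole sequence to the individual differences being non-zero-divisors, which you flag as the delicate point, is asserted without further comment in the paper as well, so your proposal is at least as careful as the published argument.
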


\begin{proof}
The partitioning $P$ gives an ideal $\widetilde{I_d}$ which is a polarization of $I_d$ if and only if $x_i^{(j)}-x_i^{(j')}$ is a non-zero divisor in $\widetilde{S}/\widetilde{I_d}$, for all $i,j,j'$. Any annihilator for $x_i^{(j)}$ is divisible by $\frac{m_{\sigma\cup\{i\}}}{x_i^{(j)}}$, for a $\sigma\in P_{i,j}$, and an annihilator for $x_i^{(j')}$ is divisible $\frac{m_{\tau\cup\{i\}}}{x_i^{(j')}}$, for a $\tau\in P_{i,j'}$. So any annihilator of $x_i^{(j)}-x_i^{(j')}$ is divisible by $m=\mathrm{lcm}\left(\frac{m_{\sigma\cup\{i\}}}{x_i^{(j)}}, \frac{m_{\tau\cup\{i\}}}{x_i^{(j')}}\right)$. But $\overline{m}=0$ in $S/\widetilde{I_d}$ if and only if $m_\beta$ divides $m$ for a $\beta\in \Gamma_d$. But $m_\beta$ divides $m$ if and only if $\beta\setminus\{t\}$ is in the same partition as $\sigma\cup\{i\}\setminus\{t\}$ or $\sigma\cup\{i\}\setminus\{t\}$ in the partitioning of $\Sigma_t^d$, for all $t\in\beta$.
\end{proof}

\begin{Example}
The box polarization corresponds to the following partitioning. If $\sigma\in \Sigma_i^d$, and $\sigma = \{s_1,s_2,\dots, s_{d-1}\}$, such that $s_1<s_2<\cdots < s_{r-1} < i < s_{r}<\cdots < s_{d-1}$, then $\sigma\in P_{i,r}$. Although it is well known that the box polarization is a polarization, we can use Proposition \ref{partisjonering} to verify this fact. So let $\sigma\in P_{i,j}$ and $\tau\in P_{i,j'}$, and we may assume that $j<j'$. Then we choose $\beta\subseteq \sigma\cup\tau$ in the following way. For $a<j$, we choose $\beta_a = \sigma_a$, for $j\le a < j'$ we choose $\beta_a = \tau_a$, and finally for $j'\le a$ we choose $\beta_a = \sigma_a$. Now it is straight forward to verify that $\beta\setminus\{t\}$ is in the same partition as $\sigma\cup\{i\}\setminus\{t\}$ or $\tau\cup\{i\}\setminus\{t\}$ depending on the position of $t$ in $\beta$. Furthermore, it is easy to verify that the box polarization is a maximal polarization. For suppose that $P'$ is a partitioning that refines $P$. This means that for some $i$ there exists $\sigma$ and $\tau$ in different partitions of $\Sigma_i^d$, but where $i$ would have position $r$ in both. Now let $\beta\subseteq \sigma\cup \tau$ be any subset such that $|\beta|=d$. If we consider the $t$ in $\beta$ in position $r$, it follows from the box partitioning that $\beta\setminus\{t\}\in P_{t,r}$, but the $t$ can not have position $r$ in $\sigma\cup\{i\}$ nor in $\tau\cup\{i\}$. Therefore the condition of Proposition \ref{partisjonering} does not hold, and $P'$ is not a polarization.

We can show that the Alexander dual of this polarization is in fact of the same type, by observing that if $\sigma\in P_{i,r}$ and if $\tau\in P_{i,r}^c$ is its complement. Then $\tau = \{t_1,t_2,\dots, t_{n-d}\}$ where $t_1>t_2>\dots,t_{r-1}>i>t_r>\dots t_{n-d}$.

The box polarization of $(x_1,\dots,x_n)^d_\mathrm{sq.fr.}$ is always the box polarization of $(x_1,\dots,x_{n'})^d$, where $n'=n-d+1$. See \cite[Theorem 3.13 iii)]{NR} for more details.

\end{Example}

\bigskip

\begin{Example}
Another example is the one where we polarize one variable to as many variables as possible. For a fixed $i$, if we have $\Sigma_i^d = \{\sigma_1,\sigma_2,\dots,\sigma_r\}$, we can now form the partitioning where $P_{i,j} = \{\sigma_j\}$, and $P_{i',1}=\Sigma_{i'}$ for all $i'\neq i$. By Proposition \ref{partisjonering} this is clearly a polarization. It is also easy to show that this polarization is maximal. If $P'$ is a partitioning that refines $P$, then it is possible to find a $\sigma$ and $\tau$ in $\Sigma_j^d$, such that $\sigma\in P'_{j,s}$, $\tau\in P'_{j,s'}$, $i\in \sigma$ and $|\sigma\cup\tau|=d$. We show that $P'$ can not be a polarization by using Proposition \ref{partisjonering}. Since $|\sigma\cup\tau|=d$, then we only need to check the criteria for $\beta=\sigma\cup\tau$. But the criteria fails since $\beta\setminus\{i\}$ and $\sigma\cup\{j\}\setminus\{i\}$ must be in different partitions of $\Sigma_i^d$ since the partitioning has only one element in each part. So $P'$ is not a polarization.

By the construction of the dual partition it is easy to see that this construction is self dual, meaning that the Alexander dual is constructed by a partitioning of exactly the same type. 

For $d=2$, this polarization of $(x_1,\dots,x_n)^d_\mathrm{sq.fr.}$ is the standard polarization of $(x_1,\dots,x_{n-1})^2$. However, this is not true for $d>2$ as this polarization has more variables than the standard polarization of $(x_1,\dots,x_{n'})^d$. See Example \ref{standardpolar} below.

\end{Example}

\section{Polarizations of $(x_1,\dots,x_n)^2_\mathrm{sq. fr.}$ and its Alexander dual}\label{sqfr2}
Let $I=(x_1,\dots,x_n)^2_\mathrm{sq. fr.}$, and let $D(I)=(x_1,\dots,x_n)^{n-1}_{\mathrm{sq.fr.}}$. As we have seen in the previous section, polarization of $I$ and $D(I)$ are dual to each other, so we will study polarization of these ideals simultaneously. We want to study different polarizations of the ideal $I$. We will show that polarizations of $I$ and $D(I)$ naturally corresponds to connected subgraphs of the complete graph $K_n$ on $n$ vertices. However, not all connected subgraphs correspond to polarizations of $I$ and $D(I)$. We show a one-to-one correspondence between maximal polarizations and spanning trees of $K_n$, and we show that two special spanning trees correspond to the box polarization and the standard polarization.

\begin{Lemma}
If $J$ is a polarization of $D(I)$ then there exists a path of linear relations between any two generators.
\end{Lemma}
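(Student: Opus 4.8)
The plan is to reformulate the statement as the connectivity of a graph and then exploit that $J$ must have a linear resolution. Let $G$ be the graph whose vertices are the minimal generators of $J$, with an edge between two of them exactly when they admit a linear syzygy; connectedness of $G$ is precisely the assertion that any two generators are joined by a path of linear relations. First I would record the shape of the minimal resolution of $D(I)=(x_1,\dots,x_n)^{n-1}_{\mathrm{sq.fr.}}$. Its generators are $n_k=\prod_{i\neq k}x_i$ for $k\in[n]$; any two satisfy $\mathrm{lcm}(n_k,n_l)=x_1\cdots x_n$, and the Koszul relations $x_ke_k-x_le_l$ span an $(n-1)$-dimensional space of first syzygies, so $D(I)$ has the linear resolution $0\to S^{n-1}\to S^n\to D(I)\to 0$. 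Thus $\beta_0=n$ and $\beta_1=n-1$, with all first syzygies in internal degree $n$. Since polarization preserves the $\mathbf{Z}$-graded Betti numbers, the same holds for $J$: it is minimally generated by $n$ squarefree monomials $\widetilde n_1,\dots,\widetilde n_n$ of degree $n-1$ in a polynomial ring $\widetilde S$, and its first syzygy module $\mathrm{Syz}_1(J)$ is generated in degree $n$.

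Next I would describe the degree-$n$ syzygies explicitly. Fix a multidegree $\mathbf b$; as $J$ is squarefree so is $x^{\mathbf b}$, and when $\deg\mathbf b=n$ the generators dividing $x^{\mathbf b}$ are exactly those of the form $x^{\mathbf b}/x_i^{(j)}$, i.e. $x^{\mathbf b}$ with one variable deleted. A degree-$\mathbf b$ element of $\widetilde S^{\,n}$ is $\sum_m c_m\,(x^{\mathbf b}/\widetilde n_m)\,e_m$ with $c_m\in k$, and it maps to $(\sum_m c_m)\,x^{\mathbf b}$, hence is a syzygy iff $\sum_m c_m=0$. Therefore the degree-$\mathbf b$ syzygies are spanned by the binomials $(x^{\mathbf b}/\widetilde n_m)e_m-(x^{\mathbf b}/\widetilde n_{m'})e_{m'}$, each supported on a single pair $\{m,m'\}$ with $\mathrm{lcm}(\widetilde n_m,\widetilde n_{m'})=x^{\mathbf b}$ of degree $n$, that is, on an edge of $G$. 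Since $\mathrm{Syz}_1(J)$ is generated in degree $n$, I conclude that $\mathrm{Syz}_1(J)$ is generated as an $\widetilde S$-module by such \emph{edge-binomials}.

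The key step is then a short domain argument. Suppose for contradiction that $G$ is disconnected and split its vertex set as $A\sqcup B$, nonempty, with no edges between $A$ and $B$. Every edge-binomial is supported inside $A$ or inside $B$, so grouping the $A$- and $B$-terms of any $\widetilde S$-combination of edge-binomials shows $\mathrm{Syz}_1(J)=\bigl(\mathrm{Syz}_1(J)\cap\bigoplus_{k\in A}\widetilde S e_k\bigr)\oplus\bigl(\mathrm{Syz}_1(J)\cap\bigoplus_{k\in B}\widetilde S e_k\bigr)$. But for any $a\in A$ and $b\in B$ the relation $\widetilde n_b\,e_a-\widetilde n_a\,e_b$ lies in $\mathrm{Syz}_1(J)$ and mixes the two blocks, so the splitting forces its $A$-part $\widetilde n_b\,e_a$ to be a syzygy, i.e. $\widetilde n_b\widetilde n_a=0$ in the domain $\widetilde S$, which is absurd. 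Hence $G$ is connected, which is the desired statement.

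I expect the main work to be the second paragraph — transporting the linear resolution of $D(I)$ across the polarization (so that $\mathrm{Syz}_1(J)$ really is concentrated in degree $n$) and reading off that the degree-$n$ syzygies are spanned by edge-binomials; once this is secured, the splitting-plus-domain contradiction is immediate. An alternative to the final step, closer to the style of Section~\ref{xyz}, would be to apply \cite[Prop.~4.5]{MS} at $\mathbf b=\mathrm{lcm}(\widetilde n_1,\dots,\widetilde n_n)$, where acyclicity of $\mathcal F_{\le\mathbf b}$ forces the $1$-skeleton on all generators to be connected; but the splitting argument above avoids committing to a cellular structure and seems cleaner.
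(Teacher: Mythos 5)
Your proof is correct, but it runs along a genuinely different track from the paper's. The paper disposes of this lemma in two lines by quoting the structure theory of \cite{Fl}: $J$ is Cohen--Macaulay of codimension $2$ with linear resolution, hence its minimal free resolution is cellular and supported on a tree whose edges are linear relations, and a tree is connected. You instead give a self-contained argument: the resolution $0\to S(-n)^{n-1}\to S(-(n-1))^n\to D(I)\to 0$ transfers to $J$ because polarization preserves $\mathbf{Z}$-graded Betti numbers, the multigraded pieces of $\mathrm{Syz}_1(J)$ in total degree $n$ are visibly spanned by binomials supported on pairs with degree-$n$ lcm, and a disconnection of the linear-syzygy graph would split $\mathrm{Syz}_1(J)$ as a direct sum, which the Koszul relation $\widetilde n_b e_a-\widetilde n_a e_b$ contradicts since $\widetilde S$ is a domain. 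All the steps check out (the only cosmetic slip is the aside that every multidegree $\mathbf b$ is squarefree; what matters is that in total degree $n$ a non-squarefree $\mathbf b$ admits at most one generator as a divisor, so contributes no syzygies). What your route buys is independence from the cellular/tree machinery and extra generality: the same argument shows that any monomial ideal generated in a single degree whose first syzygies are concentrated one degree higher has a connected graph of linear relations. What the paper's route buys is brevity and coherence with the rest of Section \ref{sqfr2}, where the tree supporting the resolution is not just a tool for connectivity but the central object (the spanning-tree classification of maximal polarizations), so invoking \cite{Fl} here does double duty.
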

\begin{proof}
If $J$ is a polarization of $D(I)$, then $J$ is also linear of codimension $2$. So it has a cellular linear minimal free resolution which must be supported on a tree consisting of linear relations between its generators. See \cite{Fl} for more details.
\end{proof}

\begin{Lemma}\label{AD}
If $J$ is a polarization of the ideal $D(I)$, and if $J$ is generated by the monomials
$$m_i=x_1^{(a_{i,1})}\cdots \widehat{x_i} \cdots x_n^{(a_{i,n})},$$
then the Alexander dual of $J$ is the ideal generated by the monomials $x_i^{(a_{j,i})}x_j^{(a_{i,j})}$, for $i\neq j$, and $D(J)$ is a polarization of $I$.
\end{Lemma}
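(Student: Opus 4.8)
The plan is to deduce the structural half of the statement, ``$D(J)$ is a polarization of $I$,'' directly from the earlier theorem of this section asserting that the Alexander dual of a polarization is again a polarization. Applying it to the polarization $J$ of $D(I)$ and using that Alexander duality is an involution, so $D(D(I))=I$, yields at once that $D(J)$ is a polarization of $I=(x_1,\dots,x_n)^2_{\mathrm{sq.fr.}}$. Since the $\mathbf{Z}$-graded Betti numbers are preserved under polarization, $D(J)$ has exactly $\binom{n}{2}$ minimal generators, each of degree $2$. The real content is then to identify these generators explicitly as the monomials $g_{ij}=x_i^{(a_{j,i})}x_j^{(a_{i,j})}$.

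To do this I would work with the support sets $\mathrm{Supp}(m_k)=\{x_l^{(a_{k,l})}\mid l\neq k\}$ and recall the displayed formula for Alexander duals: a monomial $n_\tau$ lies in $D(J)$ precisely when $\mathrm{Supp}(n_\tau)$ meets every $\mathrm{Supp}(m_k)$. First I would check that each $\mathrm{Supp}(g_{ij})=\{x_i^{(a_{j,i})},x_j^{(a_{i,j})}\}$ is such a transversal. For $k=i$ the variable $x_j^{(a_{i,j})}$ lies in $\mathrm{Supp}(m_i)$, and for $k=j$ the variable $x_i^{(a_{j,i})}$ lies in $\mathrm{Supp}(m_j)$, so these cases are immediate. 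The crux is the case $k\neq i,j$, where I must show that $a_{j,i}=a_{k,i}$ or $a_{i,j}=a_{k,j}$.

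Here I would use that $J$ is a polarization, i.e. that each $x_i^{(s)}-x_i^{(s')}$ is a non-zerodivisor modulo $J$. Writing $s=a_{j,i}$ and $s'=a_{k,i}$ and assuming for contradiction that $s\neq s'$ and $a_{i,j}\neq a_{k,j}$, I set $f=\mathrm{lcm}(m_j,m_k)/(x_i^{(s)}x_i^{(s')})$. Since $x_i^{(s')}\nmid m_j$ and $x_i^{(s)}\nmid m_k$, one checks that $x_i^{(s)}f$ is divisible by $m_j$ and $x_i^{(s')}f$ by $m_k$, so both lie in $J$; the non-zerodivisor property then forces $f\in J$, hence some generator $m_l$ divides $f$. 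But $f$ contains no $x_i$-variable, which forces $l=i$, and then $m_i\mid f$ would require $x_j^{(a_{i,j})}\mid f$, i.e. $a_{i,j}=a_{k,j}$ (the only $x_j$-variable occurring in $f$ is $x_j^{(a_{k,j})}$), contradicting the assumption. This establishes the transversal property, so $g_{ij}\in D(J)$ for all $i\neq j$.

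Finally I would conclude by counting: the $g_{ij}$ are $\binom{n}{2}$ distinct squarefree monomials of degree $2$ lying in $D(J)$, and since every minimal generator of $D(J)$ has degree $2$, each $g_{ij}$ must itself be a minimal generator; matching the count of $\binom{n}{2}$ minimal generators then forces $D(J)=(g_{ij}\mid i\neq j)$. The main obstacle is the crux case above: extracting the combinatorial alternative $a_{j,i}=a_{k,i}$ or $a_{i,j}=a_{k,j}$ from the non-zerodivisor hypothesis, which is exactly the point where the polarization property (and not merely the correct depolarization) is used.
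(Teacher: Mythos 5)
Your argument is correct, but it takes a different route from the paper. The paper's proof is essentially a two-line corollary of Theorem \ref{polarisering}: it translates $J$ into a partitioning of the sets $\Sigma_i^{n-1}$ (the generator $m_j$ puts $[n]\setminus\{i,j\}$ into $P_{i,a_{j,i}}$, hence $\{j\}$ into $P_{i,a_{j,i}}^c$) and simply reads off the generators of $D(J)$ from the dual partitioning. You instead bypass the partition machinery entirely and work directly with the transversality description of the Alexander dual: you verify that $\mathrm{Supp}(x_i^{(a_{j,i})}x_j^{(a_{i,j})})$ meets every $\mathrm{Supp}(m_k)$, with the only nontrivial case $k\neq i,j$ handled by the non-zerodivisor trick applied to $f=\mathrm{lcm}(m_j,m_k)/(x_i^{(a_{j,i})}x_i^{(a_{k,i})})$, and then close by the Betti-number count coming from the section's first theorem. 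This is in effect a self-contained re-derivation of the special case $d=n-1$ of Theorem \ref{polarisering}, including its key lemma (the lcm/zero-divisor argument) and its concluding counting step; what it buys is independence from the partition formalism, at the cost of redoing work the paper has already packaged. One small caveat, which applies equally to the paper: both arguments use that every individual difference $x_i^{(s)}-x_i^{(s')}$ is a non-zerodivisor modulo a polarization, which is slightly stronger than the literal regular-sequence condition in the definition; the paper uses this freely elsewhere (e.g.\ in Proposition \ref{partisjonering}), so you are on safe ground, but it is worth being aware that this step is being taken.
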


\begin{proof}
This follows from Theorem \ref{polarisering}. For instance, if $i<j$, then in the partition corresponding to $J$ we must have that $\{1,2,\dots,\widehat{i},\dots,\widehat{j}\dots, n\}\in P_{i,a_{i,j}}$. This means that $\{j\}\in P_{i,a_{i,j}}^c$, which means that $x_j$ is polarized to $x_j^{(a_{i,j})}$ in the monomial $x_ix_j$. Using exactly the same argument we can show that $x_i$ is polarized to $x_i^{(a_{j,i})}$ in the monomial $x_ix_j$, and the result follows.
\end{proof}

So if $J$ is a polarization of $D(I)$, and if we let the generators of $J$ correspond to the vertices in the graph $K_n$, then the linear relations between the generators correspond to a connected subgraph of $K_n$. On the other hand, we want to show that any spanning tree for $K_n$ corresponds to a maximal polarization of $D(I)$. The technique we will use is a square-free version of the technique used in \cite{Fl}.

Let $T$ be a spanning tree for $K_n$, and label the edges in $T$ by $e_1,\dots,e_{n-1}$. Every edge $e_i=(v_i, w_i)$ disconnects $T$ into two parts, and we label the vertices in the component containing $v_i$ by $w_i^{(i)}$ and the vertices in the component containing $w_i$ by $v_i^{(i)}$. For each vertex $v$ in $T$, we now get a monomial 
$$m_v:=\prod_{v\text{ is labeled by }j^{(i)}} x_j^{(i)},$$
and we let $P=(m_1,\dots,m_n).$

\begin{Prop}
The ideal $P$ is a polarization of $D(I)$ and the linear relations between the generators are precisely the edges given by the tree $T$.
\end{Prop}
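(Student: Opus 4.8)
The plan is to verify directly the three ingredients that together say $P$ is a polarization of $D(I)$ with the advertised linear relations: that $P$ depolarizes onto the generators of $D(I)$, that the cellular complex $\mathcal{F}_T$ supported on the tree $T$ is a minimal free resolution of $\widetilde{S}/P$, and that the defining regular-sequence condition then follows. First I would pin down the combinatorics of the labels. For a vertex $v$ and an edge $e_i=(v_i,w_i)$, the factor that $e_i$ contributes to $m_v$ is the variable attached to the endpoint of $e_i$ lying in the component not containing $v$; reading off base variables, $m_v$ picks up $x_j$ (with $j$ an endpoint of $e_i$) exactly when $v$ lies on the far side of $e_i$ from $j$. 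Using that in a tree an edge separates two vertices iff it lies on the path between them, I would show that for each $j\neq v$ there is exactly one edge whose far endpoint from $v$ is $j$, namely the last edge of the $v$--$j$ path, and that no edge contributes $x_v$. Hence $m_v$ depolarizes to $\prod_{j\neq v}x_j$, so $P$ depolarizes precisely onto the generating set of $D(I)$, giving condition (2) of the definition of polarization.

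The same separation principle handles the linear relations. Two generators $m_v$ and $m_w$ differ exactly in the factors coming from the edges that separate $v$ and $w$, and the number of such edges is the length of the $v$--$w$ path in $T$. Thus $m_v$ and $m_w$ differ in a single polarized variable, i.e. $\deg\,\mathrm{lcm}(m_v,m_w)=n$ is a linear syzygy, if and only if $v$ and $w$ are adjacent in $T$. This proves the ``furthermore'' assertion and identifies the graph of linear relations with $T$ itself.

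Next I would show that $\mathcal{F}_T$, with vertices labeled by the $m_v$ and edges by the lcm's above, is a minimal free resolution of $\widetilde{S}/P$. Minimality is immediate since each edge label strictly contains both of its vertex labels. For acyclicity I would use the criterion that $\mathcal{F}_{T,\le\mathbf{b}}$ is acyclic for every monomial $\mathbf{b}$; since $T$ is a tree this subcomplex is a forest, so it suffices to prove it is connected whenever nonempty. The hard part, which I expect to be the main obstacle, is the following lemma: if $m_v\mid\mathbf{b}$ and $m_w\mid\mathbf{b}$, then every vertex $u$ on the $v$--$w$ path satisfies $m_u\mid\mathrm{lcm}(m_v,m_w)\mid\mathbf{b}$. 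Indeed, if $x_j^{(i)}\mid m_u$ then $u$ lies on the far side of $e_i$ from $j$, and one of $v,w$ must also lie on that far side, since otherwise the entire $v$--$w$ path, which contains $u$, would stay on the near side; hence $x_j^{(i)}$ divides $m_v$ or $m_w$. This places the whole path inside $\mathcal{F}_{T,\le\mathbf{b}}$, giving connectivity and thus acyclicity.

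Finally I would convert ``minimal resolution on a tree'' into ``polarization''. The resolution $\mathcal{F}_T$ has length $2$, in Hilbert--Burch format with $n$ generators and $n-1$ linear first syzygies; since no single variable divides all the $m_v$ (the variable $x_j^{(i)}$ does not divide $m_j$), the ideal $P$ has codimension $2$, so $\widetilde{S}/P$ is Cohen--Macaulay. The depolarization identifies $\widetilde{S}/(P+(\sigma))$ with $S/D(I)$, and a direct count of the polarized variables gives $\dim\widetilde{S}/P-|\sigma|=\dim S/D(I)$; for a Cohen--Macaulay module a sequence of linear forms dropping dimension by its length is a regular sequence, so $\sigma$ is $\widetilde{S}/P$-regular. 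Together with the depolarization computed in the first step, this is exactly the definition of $P$ being a polarization of $D(I)$, completing the proof. Alternatively, the acyclicity step may be replaced by quoting the square-free form of the tree construction in \cite{Fl}.
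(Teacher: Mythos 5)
Your proposal is correct, and it is considerably more complete than the paper's own proof, which is a one-line observation covering only the second assertion: two generators $m_v$, $m_w$ admit a linear syzygy exactly when their labels differ in a single edge's contribution, i.e.\ when $(v,w)$ is an edge of $T$. The claim that $P$ is actually a polarization is not argued on the page at all; the paper leans implicitly on the analogy with the non-square-free construction of \cite{Fl}. You instead verify everything from scratch, and your separation lemma --- the edge $e_i$ contributes to $m_v$ the variable indexed by its far endpoint, so for any $u$ on the $v$--$w$ path every variable of $m_u$ divides $m_v$ or $m_w$ --- is exactly the right combinatorial core: it simultaneously yields the depolarization of the generators onto those of $D(I)$, the identification of the linear syzygies with the edges of $T$, and the connectivity of $\mathcal{F}_{T,\le\mathbf{b}}$, hence acyclicity of the cellular complex supported on $T$. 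The concluding conversion (projective dimension $2$ from the tree, codimension at least $2$ because $x_j^{(i)}\nmid m_j$, hence Cohen--Macaulayness, and then the count $\dim\widetilde{S}/P-|\sigma|=\dim S/D(I)$ forcing $\sigma$ to be a regular sequence) is a clean and standard Hilbert--Burch-style argument, essentially the one underlying \cite{Fl}. In short, your route buys a self-contained proof of the full statement, whereas the paper's route buys brevity at the cost of leaving the polarization claim to the reader.
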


\begin{proof}
By the construction of $P$, we see that a linear relation between $m_i$ and $m_j$ occurs if and only if the labels in the vertices $i$ and $j$ differs by only one, that is if and only if $e=(i,j)$ is an edge if $T$.
\end{proof}

We can use Lemma \ref{AD} to give an explicit description of the Alexander dual of the ideal above.

\begin{Prop}\label{PolAD}
Let $T$ be a spanning tree for the complete graph on $\{1,\dots,n\}$, and suppose the edges in $T$ are labeled by $e_1,\dots,e_{n-1}$, and let $P$ be the ideal described above. Then $D(P)$ is generated by the monomials $x_i^{(p)}x_j^{(q)}$, where $i\neq j$ and the unique path from $i$ to $j$ in $T$ starts in $e_p$ and ends in $e_q$.
\end{Prop}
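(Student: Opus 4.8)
The plan is to invoke Lemma \ref{AD} and then read off the polarization indices $a_{i,j}$ directly from the construction of $P$. Recall that Lemma \ref{AD} asserts that if $P$ is generated by the monomials $m_i = x_1^{(a_{i,1})}\cdots \widehat{x_i}\cdots x_n^{(a_{i,n})}$, then $D(P)$ is generated by the monomials $x_i^{(a_{j,i})}x_j^{(a_{i,j})}$ for $i\neq j$. So the entire content of the proposition reduces to identifying $a_{i,j}$ with the index of an appropriate edge of the unique path from $i$ to $j$ in $T$.

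First I would unwind the construction of the generator $m_i$ attached to the vertex $i$. For a fixed edge $e_k=(v_k,w_k)$, deleting $e_k$ splits $T$ into the $v_k$-component and the $w_k$-component, and by construction the vertex $i$ contributes from $e_k$ the factor $x_\ell^{(k)}$, where $\ell$ is whichever endpoint of $e_k$ lies on the side opposite to $i$. In particular, a factor $x_j^{(k)}$ with $j$ an endpoint of $e_k$ divides $m_i$ precisely when $i$ and $j$ lie in different components of $T\setminus e_k$.

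Next I would check that for each $j\neq i$ there is exactly one such factor and pin down the edge that produces it. Consider the unique path from $i$ to $j$ in $T$ and let $e_k$ be the edge of this path incident to $j$. Deleting $e_k$ separates $i$ from $j$, and $j$ is an endpoint of $e_k$, so $x_j^{(k)}$ divides $m_i$. Conversely, if $e_{k'}$ is an edge incident to $j$ not lying on the path, then $i$ remains connected to $j$ in $T\setminus e_{k'}$, so $x_j^{(k')}$ does not divide $m_i$. Hence $a_{i,j}$ equals the index of the edge incident to $j$ on the path from $i$ to $j$, i.e. the last edge of that path. (This also re-confirms that no polarization of $x_i$ occurs in $m_i$, consistent with Lemma \ref{AD}.)

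Finally I would combine the two observations. Since $a_{i,j}$ is the index of the edge at which the path from $i$ to $j$ ends, and $a_{j,i}$ is by the same token the index of the edge at which the path from $j$ to $i$ ends, the latter is the edge incident to $i$, i.e. the first edge of the path from $i$ to $j$. Setting $p=a_{j,i}$ and $q=a_{i,j}$, the generator $x_i^{(a_{j,i})}x_j^{(a_{i,j})}$ of $D(P)$ is exactly $x_i^{(p)}x_j^{(q)}$ where the unique path from $i$ to $j$ starts in $e_p$ and ends in $e_q$, as claimed. I expect the only genuine obstacle to be the bookkeeping of the third paragraph, namely verifying that exactly one factor $x_j^{(\cdot)}$ survives in $m_i$ and correctly matching its superscript to the endpoint edge; once the indices $a_{i,j}$ are determined, the statement follows immediately from Lemma \ref{AD}.
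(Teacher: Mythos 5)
Your proof is correct and takes essentially the same route as the paper: both reduce the statement to Lemma \ref{AD} and then identify $a_{i,j}$ (resp.\ $a_{j,i}$) with the index of the last (resp.\ first) edge of the unique path from $i$ to $j$. The only difference is that you spell out the bookkeeping (which edges $e_k$ contribute a factor $x_j^{(k)}$ to $m_i$, and why exactly one of them is incident to $j$) that the paper dismisses as clear from the construction.
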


\begin{proof}
This follows immediately from Lemma \ref{AD}. We just need to verify that if $i$ and $j$ are two vertices in $K_n$, then $x_j^{(q)}$ divides $m_i$ and $x_i^{(p)}$ divides $m_j$ (i.e. $a_{i,j}=q$ and $a_{j,i}=p$). This is just the same as to say that the vertex $i$ is labeled by $j^{(q)}$ and that $j$ is labeled by $i^{(p)}$. By the construction this is clear when the unique path from $i$ to $j$ starts in $e_p$ and ends in $e_q$. 

\end{proof}

\begin{Example}\label{boxpolar}
The box polarization comes from the following tree:

\begin{center}
\begin{picture}(200,30)
\put(10,12){\circle*{2}}
\put(7,0){$1$}
\put(25,17){$e_1$}
\put(10,12){\line(1,0){40}}
\put(50,12){\circle*{2}}
\put(47,0){$2$}
\put(65,17){$e_2$}
\put(50,12){\line(1,0){40}}
\put(90,12){\circle*{2}}
\put(87,0){$3$}
\put(96,11){$\dots \dots$}
\put(130,12){\circle*{2}}
\put(120,0){$n-1$}
\put(142,17){$e_{n-1}$}
\put(130,12){\line(1,0){40}}
\put(170,12){\circle*{2}}
\put(167,0){$n$}

\end{picture}
\end{center}

This is because from Proposition \ref{PolAD}, we have that $D(P)$ is generated by the monomials $x_i^{(i)}x_j^{(j-1)}$, $1\le i < j \le n$, but if we identify each variable $x_i^{(i)}$ with $x_i^{(1)}$, and each variable $x_j^{(j-1)}$ with $x_{j-1}^{(2)}$, we see that $D(P)$ is generated by the monomials $x_i^{(1)}x_{j-1}^{(2)}$ for $1\le i<j\le n$, but this is just the same as the ideal generated by the monomials $x_i^{(1)}x_{j'}^{(2)}$ for $1\le i\le j' \le n-1$, which is the box polarization of $(x_1,\dots,x_{n-1})^2$.

\end{Example}

\bigskip
\begin{Example}\label{standardpolar}
The standard polarization comes from the following tree:
\begin{center}
\begin{picture}(80,70)
\put(10,12){\circle*{2}}
\put(7,0){$n$}
\put(10,12){\line(0,1){40}}
\put(0,31){$_{e_1}$}

\put(10,52){\circle*{2}}
\put(0,49){$1$}

\put(10,12){\line(2,5){14}}
\put(17,27){$_{e_2}$}
\put(24,47){\circle*{2}}
\put(28,45){$2$}

\put(10,12){\line(1,0){40}}
\put(23,7){$_{e_{n-1}}$}

\put(53,10){$n-1$}
\put(50,12){\circle*{2}}

\put(25,22){$\ddots$}

\end{picture}
\end{center}

Again this is because of Proposition \ref{PolAD}. We have that $D(P)$ is generated by the monomials $x_i^{(i)}x_n^{(i)}$ for $1\le i < n$ and $x_i^{(i)}x_j^{(j)}$ for $1\le i < j < n$. Now if we identify the variables $x_i^{(i)}$ by $x_i^{(1)}$ and $x_n^{(j)}$ by $x_j^{(2)}$, we have that $D(P)$ is generated by the monomials $x_i^{(1)}x_i^{(2)}$ for $1\le i < n$ and $x_i^{(1)}x_j^{(1)}$ for $1\le i < j < n$, which is the standard polarization of $(x_1,\dots, x_{n-1})^2$.
\end{Example}

\begin{Prop}
Let $J$ be a maximal polarization of the ideal $(x_1,\dots,x_n)^2$. Then $J$ is isomorphic to a maximal polarization of the ideal $I_2 = m_{\mathrm{sq.fr}}^2$.
\end{Prop}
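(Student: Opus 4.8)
The plan is to reduce $J$ to a square-free ideal and then recognise it as one of the tree polarizations produced in Proposition \ref{PolAD}. First I would pin down the shape of $J$. The only generator of $(x_1,\dots,x_n)^2$ which is a pure power is $x_i^2$, and it must polarize to $x_i^{(1)}x_i^{(2)}$; moreover no variable is split any further. Indeed, if a third copy $x_i^{(3)}$ occurred, it could only appear in the polarization of some $x_ix_j$, say $x_ix_j\mapsto x_i^{(3)}x_j^{(t)}$, and then $f=x_i^{(2)}x_j^{(t)}$ satisfies $x_i^{(1)}f,\,x_i^{(3)}f\in J$ while $f\notin J$ (the unique generator depolarizing to $x_ix_j$ is $x_i^{(3)}x_j^{(t)}\neq f$), so $x_i^{(1)}-x_i^{(3)}$ would be a zero-divisor on $\widetilde S/J$, contradicting regularity. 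Hence $J$ is square-free in the $2n$ variables $x_i^{(1)},x_i^{(2)}$, with the $\binom{n+1}{2}$ generators $x_i^{(1)}x_i^{(2)}$ and $x_i^{(a_{ij})}x_j^{(a_{ji})}$ $(i<j)$, where $a_{ij},a_{ji}\in\{1,2\}$.

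I would then record $J$ as a graph $G$ on the $2n$ variables, with one edge per generator; the $n$ ``square'' generators $x_i^{(1)}x_i^{(2)}$ form a perfect matching $M$. Since $(x_1,\dots,x_n)^2$ and $m_{\mathrm{sq.fr}}^2$ on $n+1$ vertices have the same number $\binom{n+1}{2}$ of generators, and since the tree polarizations of the latter (Proposition \ref{PolAD}, applied to $K_{n+1}$) use exactly the $2n$ half-edges of a spanning tree as variables, the assertion becomes: the $2n$ variables can be partitioned into $n+1$ blocks so that $G$ contracts to $K_{n+1}$ (no edge inside a block, exactly one edge between each pair of blocks) and $M$ becomes a spanning tree $T$ on the blocks. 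Such a partition is exactly a depolarization carrying $J$ onto $m_{\mathrm{sq.fr}}^2$, and reading $G$ off through it against Lemma \ref{AD} and Proposition \ref{PolAD} identifies $J$ with the maximal polarization of $m_{\mathrm{sq.fr}}^2$ determined by $T$; this is what we want.

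To produce $T$ I would argue by induction on $n$, peeling a leaf. The key point is that some copy $x_i^{(s)}$ occurs in no mixed generator, i.e.\ $a_{ij}\neq s$ for every $j$; this copy is a degree-one vertex of $G$ and becomes the half-edge pointing at a leaf of $T$. Its existence, and more generally the fact that $G$ is the half-edge graph of a tree, I would extract from the polarization criterion of Proposition \ref{partisjonering}: the zero-divisor computation above, applied to the two mixed generators carrying the two copies of a given $x_i$, forces the consistency relations ``$a_{ij}\neq a_{ik}\Rightarrow a_{ji}=a_{jk}$ and $a_{ki}=a_{kj}$'', which are precisely the relations satisfied by first edges of paths in a tree. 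Deleting a leaf copy together with its matched partner should yield a maximal polarization of $(x_1,\dots,x_{n-1})^2$, to which induction applies; re-attaching the leaf then rebuilds a spanning tree $T$ of $K_{n+1}$ whose half-edge graph is $G$, so that $J$ is isomorphic to the corresponding maximal polarization.

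The main obstacle is this last step: proving that a leaf copy always exists and that deletion returns an honest maximal polarization of the smaller ideal whose re-attachment reproduces every generator of $J$. Both rest entirely on the regularity of the difference sequence; it is Proposition \ref{partisjonering} that rules out the ``cyclic'' patterns of the $a_{ij}$ under which $G$ would fail to contract to $K_{n+1}$ (acquiring a repeated edge, or an edge inside a block) and hence fail to be a tree's half-edge graph. Once the tree structure is in hand, Lemma \ref{AD} and Proposition \ref{PolAD} complete the identification and the spanning-tree description of the maximal polarizations of $m_{\mathrm{sq.fr}}^2$ finishes the proof.
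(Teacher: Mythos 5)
Your opening reduction is sound, and in one respect more careful than the paper itself: the zero-divisor argument showing that no variable can acquire a third copy (so that any polarization of $(x_1,\dots,x_n)^2$ lives in the $2n$ variables $x_i^{(1)},x_i^{(2)}$ with $x_i^2\mapsto x_i^{(1)}x_i^{(2)}$) is correct, as is the derivation of the consistency relations $a_{ij}\neq a_{ik}\Rightarrow a_{ji}=a_{jk}$ and $a_{ki}=a_{kj}$ from the non-zero-divisor condition applied to two mixed generators. Your reformulation of the goal --- that the data $(a_{ij})$ must be the half-edge data of a spanning tree of $K_{n+1}$, with the $n$ square generators becoming the tree edges, so that Lemma \ref{AD} and Proposition \ref{PolAD} finish --- is also the right target. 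But the gap is exactly where you flag it, and it is not a small one: the implication ``consistency relations imply tree structure'' is the entire mathematical content of the proposition on your route, and you do not prove it; you only assert that Proposition \ref{partisjonering} ``rules out the cyclic patterns.'' Two separate facts are needed and neither is supplied. First, existence of a leaf copy: if both copies of every $x_i$ appeared in mixed generators you need a contradiction; this is provable (from $a_{i_0 j}\neq a_{i_0 k}$ pass to some $\ell$ with $a_{j\ell}\neq a_{ji_0}$ and check that $\{m : a_{jm}=a_{j\ell}\}$ is strictly contained in $\{m : a_{i_0 m}=a_{i_0 j}\}\setminus\{j\}$, so the descent terminates), but it is an argument, not a consequence one can wave at. Second, re-attachment: after deleting $x_i$ and applying induction to obtain a tree $T'$, you must produce a single vertex of $T'$ lying on the $a_{ji}$-side of $e_j$ for every $j$ simultaneously; this is a Helly-type intersection statement for subtrees whose pairwise-intersection hypothesis must again be extracted from the consistency relations. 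Until both are written out, this is a plan rather than a proof.

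For contrast, the paper takes a completely different and much shorter route: it passes to the Alexander dual $D(J)$, uses the Eagon--Reiner theorem to see that $D(J)$ is Cohen--Macaulay of codimension two, invokes Fl{\o}ystad's classification \cite{Fl} of such ideals by labelled trees to identify $D(J)$ with a tree polarization as in Proposition \ref{PolAD}, and dualizes back with Theorem \ref{polarisering}. All of the combinatorics you are attempting by hand is absorbed there into the cited classification. Your route is viable and would be self-contained (and your first paragraph even justifies the ``two copies per variable'' normalization that the paper assumes without comment), but as it stands the central combinatorial step is missing.
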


\begin{proof}
Let $J$ be a maximal polarization of the ideal $(x_1,\dots,x_n)^2$. This means that we may assume that $J$ is a square-free monomial ideal in the polynomial ring $k[x_1^{(1)},x_1^{(2)},x_2^{(1)},x_2^{(2)},\dots,x_n^{(1)},x_n^{(2)}]$. If we take the Alexander dual of $J$, we now get a square-free monomial ideal $D(J)$ in the same polynomial ring. Since $J$ is Cohen--Macaulay, and have a linear resolution, we can use the Eagon--Reiner theorem to conclude that $D(J)$ is Cohen--Macaulay of codimension two. So by \cite[Proposition 2.1]{Fl}, there is a labeled tree which gives a cellular resolution of $D(J)$. Furthermore, by \cite[Theorem 2.4]{Fl}, we can use the labelling technique above to produce a unique maximal labelling of this tree, up to isomorphism. If $J$ is a maximal polarization, then $D(J)$ is a maximal polarization, so it must be isomorphic to the ideal we get from this tree. Since this ideal really is a polarization of the ideal $I_{n+1}$, we use Theorem \ref{polarisering} to show that $D(D(J)) = J$ is isomorphic to a polarization of the ideal $I_d$. 
\end{proof}
\mbox{} 

\begin{Remark}
For $n\ge 3$ and $d\ge 3$ this result are no longer true. For instance, it is possible to show that the standard polarization of $(x_1,x_2,x_3)^3$ does not come from a polarization of the ideal $(x_1,\dots,x_5)_\mathrm{sq.fr.}^3$. This can be shown by counting the number of the different variables occuring in the standard polarization. In this case, the variables $x_1^{(1)}$, $x_2^{(1)}$ and $x_3^{(1)}$ occur each $6$ times, while the variables $x_1^{(2)}$, $x_2^{(2)}$ and $x_3^{(2)}$ occur $3$ times, and the variables $x_1^{(3)}$, $x_2^{(3)}$ and $x_3^{(3)}$ occur only once. If this now is a polarization of the ideal $(x_1,\dots,x_5)_\mathrm{sq.fr.}^3$, it must correspond to a partitioning of the set of sets $\Sigma_1^3,\dots, \Sigma_5^3$. Since $\Sigma_i^3$ consists of $6$ sets each, we must have a partitioning like for instance $\Sigma_1^3 = P_{1,1}$, $\Sigma_2^3=P_{2,1}$, $\Sigma_3^3 = P_{3,1}$, $\Sigma_4^3 = P_{4,1}\cup P_{4,2}$, and $\Sigma_5^3=P_{5,1}\cup P_{5,2}\cup P_{5,3}\cup P_{5,4}$, where $|P_{1,1}|=|P_{2,1}|=|P_{3,1}|=6$, $|P_{4,1}|=|P_{4,2}|=|P_{5,1}|=3$ and $|P_{5,2}|=|P_{5,3}|=|P_{5,4}|=1$. To see that this uneven partitioning of the sets can not correspond to the standard polarization, we need to do a case by case study of possible partitionings of this form. The details are not included in in this paper. 
\end{Remark}
\mbox{} 

\begin{Remark}
We showed in this section that maximal polarizations of $I_2$ corresponded to spanning trees of the complete graph. However, there exists many other non-maximal polarizations of this ideal. They correspond to partitioning of the sets $\Sigma_i^2=\textrm{Supp}(m_i) = \{1,\dots,\widehat{i}\dots, n\}$. In this case the result of Proposition \ref{partisjonering} can be given more explicitely.
\end{Remark}

\begin{Prop} A partitioning $\Sigma_i^2 = P_{i,1}\cup\cdots\cup P_{i,r_i}$ corresponds to a polarization of $I_2$ if and only if the following condition is satisfied:
$$\text{If }j\not\in P_{i,s}, \text{ then } \{i\}\cup P_{i,s}\subseteq P_{j,t} \text{ for some } t.$$
\end{Prop}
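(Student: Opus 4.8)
The plan is to derive this explicit criterion by specializing Proposition \ref{partisjonering} to $d=2$ and then simplifying. First I would record what the ambient objects become. Since $d=2$, the set $\Sigma_i^2$ consists of the singletons not containing $i$, so after identifying a singleton $\{a\}$ with its element $a$, a partition of $\Sigma_i^2$ is just a partition of $[n]\setminus\{i\}$ into blocks $P_{i,1},\dots,P_{i,r_i}$. For $a\neq i$ I would write $s_i(a)$ for the index of the block of $\Sigma_i^2$ containing $a$, so that the generator of $\widetilde{I_2}$ attached to a pair $\{a,b\}$ is $x_a^{(s_a(b))}x_b^{(s_b(a))}$.

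Next I would unwind the condition of Proposition \ref{partisjonering} in this situation. Here $\sigma\in P_{i,s}$ and $\tau\in P_{i,s'}$ with $s\neq s'$ are singletons $\sigma=\{a\}$, $\tau=\{b\}$ with $a\neq b$, so $\sigma\cup\tau=\{a,b\}$ and the only admissible choice is $\beta=\{a,b\}$. The key (and slightly delicate) point is that for $t=a$ only one of the two alternatives in the criterion is meaningful: $\sigma\cup\{i\}\setminus\{a\}=\{i\}$ lies in $\Sigma_a^2$, whereas $\tau\cup\{i\}\setminus\{a\}=\{b,i\}$ has cardinality $2$ and is not an element of $\Sigma_a^2$ at all, so it is vacuous. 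Hence the requirement at $t=a$ reduces to $s_a(i)=s_a(b)$, i.e.\ $i$ and $b$ lie in the same block of $\Sigma_a^2$; symmetrically, the requirement at $t=b$ reduces to $s_b(i)=s_b(a)$. Thus Proposition \ref{partisjonering} says that $P$ is a polarization if and only if for every $i$ and every pair $a,b$ lying in distinct blocks of $\Sigma_i^2$ one has $s_a(i)=s_a(b)$ and $s_b(i)=s_b(a)$.

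It then remains to show that this reformulation is equivalent to the stated condition. For the forward implication I would fix $i$, a block $P_{i,s}$, and $j\neq i$ with $j\notin P_{i,s}$; then $j$ lies in a block $P_{i,s'}$ with $s'\neq s$, and for each $a\in P_{i,s}$ the pair $a,j$ sits in distinct blocks of $\Sigma_i^2$, so the reformulated condition (with the role of $b$ played by $j$) gives $s_j(a)=s_j(i)$, i.e.\ $a$ and $i$ lie in the common block $P_{j,t}$ of $\Sigma_j^2$ determined by $i$; letting $a$ range over $P_{i,s}$ yields $\{i\}\cup P_{i,s}\subseteq P_{j,t}$. For the converse I would take $a,b$ in distinct blocks of $\Sigma_i^2$ and apply the stated condition once with $j=b$ (the block $P_{i,s}\ni a$ does not contain $b$), obtaining $\{i\}\cup P_{i,s}\subseteq P_{b,t}$ and hence $i$ and $a$ in one block of $\Sigma_b^2$, and once with $j=a$ to get $i$ and $b$ in one block of $\Sigma_a^2$; these are exactly the two equalities above.

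I expect the main obstacle to be precisely the cardinality bookkeeping in the middle paragraph: one must check that in the $d=2$ specialization one branch of the ``or'' in Proposition \ref{partisjonering} falls outside $\Sigma_t^2$ and is therefore vacuous, so that the surviving branch forces a genuine ``same block'' condition. Alternatively, one can bypass Proposition \ref{partisjonering} entirely: $\widetilde{I_2}$ is the edge ideal of a graph on the vertices $x_i^{(s)}$, and a minimal witness that $x_i^{(s)}-x_i^{(s')}$ is a zero divisor must have the form $x_b^{(s_b(i))}x_c^{(s_c(i))}$ with $b\in P_{i,s}$ and $c\in P_{i,s'}$; demanding that no such non-edge exists produces the same ``same block'' equalities. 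This route makes the equivalence with the stated condition transparent, at the cost of the short lemma that the witness may be taken of degree two.
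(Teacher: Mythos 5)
Your proposal is correct and follows essentially the same route as the paper: both specialize Proposition \ref{partisjonering} to $d=2$, observe that $\beta=\{a,b\}$ is forced and that one branch of the ``or'' falls outside $\Sigma_t^2$, and conclude that the surviving branch forces $i$ and the other element into a common block of $\Sigma_t^2$. You are merely more explicit than the paper about the vacuous branch and about checking both directions of the equivalence, which is a welcome clarification rather than a different method.
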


\begin{proof}
If $j\not\in P_{i,s}$ it means that we can choose $j\in P_{i,s'}$ and $j'\in P_{i,s}$. By Proposition \ref{partisjonering} we can then create $\beta = \{j,j'\}$, and we must have that $\{j'\}=\beta\setminus\{j\}$ and $\{i\}=\{j\}\cup\{i\}\setminus\{j\}$ are in the same partition of $\Sigma_j^d$. Since we get this claim for all $j'\in P_{i,s}$, we must have that $\{i\}\cup P_{i,s}\subseteq P_{j,t}$ for some $t$.
\end{proof}

This condition is technical and not so easy to visualize combinatorial. We may however give a more geometrical approach to explain how these polarizations behave.

\begin{Def}
Let $G$ be a graph with vertices $1,2,\dots,n$. The ideal
$$I_G = \left(x_ix_j\,\middle| \, (i,j)\in G\right)$$
in $k[x_1,\dots,x_n]$ is called the edge ideal of $G$.
\end{Def}

\begin{Example}
The ideal $I = (x_1,\dots,x_n)_{\mathrm{sq.fr}}^2$ is the edge ideal of $K_n$. Furthermore, if the ideal $\widetilde{I}$ is a polarization of $I$ where one variable $x_i$ is polarized into the variables $x_i$ and $x_{i'}$, then $\widetilde{I}$ is the edge ideal of the graph obtained by polarizing the vertex $i$ into $i$ and $i'$. 
\end{Example}

Since every polarization $\widetilde{I}$ of $I$ can be obtained inductively by successive polarization like this, we need a criterion for whenever an edge ideal can be polarized. We can then use this criteria for examining all possible polarization of the ideal $I_2$.

\begin{Prop}
Let $G$ be a graph on the vertices $1,\dots,n$, and let $I_G$ be the corresponding edge ideal. Then there exists a polarization $\widetilde{I_G}$ of $I_G$, where the variable $x_i$ is polarized into $x_i$ and $x_{i'}$ if and only if the graph $G|_{\mathrm{link}(i)}$, i.e. the graph $G$ restricted to the vertices which are neighbour of $i$, contains a complete bipartite graph on all its vertices. Every choice of such a bipartitioning corresponds to a polarization of $G$ by letting $i$ be neighbour to the vertices in one part and $i'$ be neighbour to the other part, and $\widetilde{I_G}$ is made similarly by choosing $x_i$ in the monomials corresponding to one part, and $x_{i'}$ in the other part. 
\end{Prop}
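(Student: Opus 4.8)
The plan is to recognize that prescribing such a polarization is the same as prescribing a partition of the neighbourhood $N(i)=\mathrm{link}(i)$ into two blocks $A$ and $B$: the neighbours $j\in A$ are those whose edge $x_ix_j$ stays attached to $x_i$, while the neighbours $j\in B$ are those whose edge becomes $x_{i'}x_j$; every edge of $G$ not meeting $i$ is left untouched. With $\widetilde{I_G}$ built this way, substituting $x_{i'}=x_i$ visibly recovers $I_G$, so the depolarization requirement is free and the entire content is the regularity of $x_i-x_{i'}$ on $\widetilde{S}/\widetilde{I_G}$. Thus I would reduce the proposition to the claim that $x_i-x_{i'}$ is a non-zero divisor if and only if the chosen bipartition $A\sqcup B$ realises a complete bipartite graph inside $G|_{N(i)}$, i.e. every pair $a\in A$, $b\in B$ is an edge of $G$.

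Next I would install the zero-divisor criterion. Since $\widetilde{I_G}$ is a monomial ideal, two distinct monomials are congruent modulo $\widetilde{I_G}$ only when both lie in $\widetilde{I_G}$; hence $x_i-x_{i'}$ is a zero divisor precisely when there is a monomial $m$ with $x_i m\in\widetilde{I_G}$ and $x_{i'}m\in\widetilde{I_G}$ but $m\notin\widetilde{I_G}$. As usual one may take $m$ square-free and divisible by neither $x_i$ nor $x_{i'}$, so $m=m_\tau$ for some $\tau\subseteq[n]\setminus\{i\}$.

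Then I would translate these containments into the language of $G$. Because $x_i m_\tau$ involves no $x_{i'}$, the only generators that can divide it are the edges of $G$ not meeting $i$ and the monomials $x_ix_j$ with $j\in A$; the hypothesis $m_\tau\notin\widetilde{I_G}$ excludes the former, so $x_i m_\tau\in\widetilde{I_G}$ forces $\tau\cap A\neq\emptyset$, and symmetrically $x_{i'}m_\tau\in\widetilde{I_G}$ forces $\tau\cap B\neq\emptyset$. Moreover $m_\tau\notin\widetilde{I_G}$ says exactly that $\tau$ is an independent set of $G\setminus\{i\}$. Choosing $a\in\tau\cap A$ and $b\in\tau\cap B$, the two-element set $\{a,b\}$ is again independent, so a witnessing $m_\tau$ exists if and only if there is a non-edge $\{a,b\}$ of $G$ with $a\in A$ and $b\in B$. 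Consequently $x_i-x_{i'}$ is a non-zero divisor exactly when no such non-edge exists, i.e. when every $a\in A$ and $b\in B$ are joined by an edge of $G$, which is precisely the condition that the complete bipartite graph on $(A,B)$ is contained in $G|_{N(i)}$ and spans $\mathrm{link}(i)$. Reading this equivalence in both directions yields simultaneously the existence criterion and the explicit description of each polarization through its bipartition.

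The argument is essentially bookkeeping once the zero-divisor criterion is in place; the one point deserving care is the double reduction, first to a square-free monomial avoiding $x_i$ and $x_{i'}$ and then to a two-element independent set, since it is this collapse that turns the seemingly global obstruction into the single local condition ``$A$-to-$B$ is complete.'' I do not anticipate any obstacle beyond this: the forward and reverse implications are merely the two directions of one equivalence, so the criterion for a valid split and the concrete construction of $\widetilde{I_G}$ from each complete bipartite subgraph of $\mathrm{link}(i)$ emerge together.
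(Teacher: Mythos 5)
Your proof is correct and follows essentially the same route as the paper: reduce everything to the regularity of $x_i-x_{i'}$ on $\widetilde{S}/\widetilde{I_G}$, characterize a zero-divisor by a monomial witness $m$ with $x_im,x_{i'}m\in\widetilde{I_G}$ but $m\notin\widetilde{I_G}$, and collapse that witness to a non-edge $\{a,b\}$ with $a\in A$, $b\in B$. If anything, your write-up is more careful than the paper's (which asserts the ``only if'' direction of the witness analysis without spelling out the reduction to a two-element independent set), so no changes are needed.
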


\begin{proof}
Assume that $\widetilde{I_G}$ is a possible polarization of $I_G$, where the variable $x_i$ is polarized into two variables $x_i$ and $x_{i'}$. This corresponds to a bipartitioning of the neighbourhood vertices $V=A\cup B$. The ideal is a polarization if and only if $x_i-x_{i'}$ is a non-zero divisor in $\widetilde{S}/\widetilde{I_G}$. So assume that there is a monomial $m$ such that $(x_i-x_{i'})\overline{m} = 0$. But this means that $x_i\overline{m} = x_{i'}\overline{m}=0$. This means that $x_im\in \widetilde{I_G}$ and $x_{i'}m\in \widetilde{I_G}$. This happens if $m$ is divisible by any variable $x_a$ with $a\in A$ and any variable $x_b$ with $b\in B$. So $x_i-x_{i'}$ is a zero-divisor if there exists a monomial $m = x_ax_b$ such that $a\in A$, $b\in B$ and $x_ax_b\not\in \widetilde{I_G}$. But this happens if and only if $G|_V$ does not contain a complete bipartite graph $K_{A,B}$.
\end{proof}


\bibliography{polar}

\end{document}